\theoremstyle{plain}
\newtheorem{theorem}{Theorem}
\newtheorem{lemma}[theorem]{Lemma}
\theoremstyle{definition}
\newtheorem{definition}[theorem]{Definition}
\theoremstyle{remark}
\newtheorem*{remark}{Remark}
\newtheorem*{example}{Example}
\numberwithin{equation}{section}
\numberwithin{theorem}{section}
\begin{document}
\title[Minimal clones with weakly abelian representations]{Minimal clones
with weakly abelian representations}
\author[T. Waldhauser]{Tam\'{a}s Waldhauser}
\address{Bolyai Institute\\
University of Szeged\\
Aradi v\'{e}rtan\'{u}k tere 1, H6720, Szeged, Hungary}
\email{twaldha@math.u-szeged.hu}
\thanks{{Research supported by the Hungarian National Foundation for
Scientific Research grant no. T 026243 and the Research Group on Artificial
Intelligence, HAS-SZTE}}
\keywords{{clone, minimal clone, (weakly) abelian algebra, groupoid}}
\subjclass[2000]{08A40, 20N02}

\begin{abstract}
We show that a minimal clone has a nontrivial weakly abelian representation
iff it has a nontrivial abelian representation, and that in this case all
representations are weakly abelian.
\end{abstract}

\dedicatory{Dedicated to B\'{e}la Cs\'{a}k\'{a}ny on his seventieth birthday}
\maketitle

\section{Introduction}

\bigskip

A \textit{concrete clone} is a composition-closed collection of operations
on some set containing all the projections. An \textit{abstract clone} is a
heterogeneous algebra equipped with operations which mimic the composition
operations of concrete clones. (For the formal definition see \cite{Taylor}
or \cite{LLPPP}.)

A \textit{representation} of an abstract clone is a homomorphism into the
concrete clone of operations on a given set. Usually one obtains a
representation by picking a set of generators of the clone and assigning to
each of them an operation of the same arity on a set in such a way that this
assignment extends to a clone homomorphism. Thus each representation gives
an algebra, and these algebras form a variety. (If we choose another set of
generators, then we get another variety which is term-equivalent to the
previous one.) Conversely, every variety arises in this way from the clone
of term functions of the countably generated free algebra in the variety.

A clone is \textit{minimal} if it has exactly two subclones: the clone
itself and the clone which consists of projections only. The latter is
called a \textit{trivial clone}, and in this paper we will call an algebra
trivial if the clone of its term functions is trivial (even if the algebra
has more than one element!). Specially, a groupoid is trivial iff it is a
left or right zero semigroup. A nontrivial representation of a minimal clone
is also minimal, so if a variety has a minimal clone, then any nontrivial
algebra in the variety has a minimal clone.

Let us now recall the definition of four variants of abelianness (cf.\cite%
{Keith-Emil}). For an algebra $\mathbb{A}$ let $\mathcal{M}({\mathbb{A}})$
denote the set of $2\times 2$ matrices of the form $\Bigl(%
\begin{smallmatrix}
{t(\mathbf{a},\mathbf{c})} & {t(\mathbf{a},\mathbf{d})} \\ 
{t(\mathbf{b},\mathbf{c})} & {t(\mathbf{b},\mathbf{d})}%
\end{smallmatrix}%
\Bigr)$ where $t$ is a polynomial of $\mathbb{A}$ of arity $n+m$ and $%
\mathbf{a},\mathbf{b}\in A^{n},\ \mathbf{c},\mathbf{d}\in A^{m}$.

\begin{definition}
\label{DEF 1.1}We say that an algebra $\mathbb{A}$ is

\begin{enumerate}
\item \label{weakly abelian}\emph{weakly abelian} if $\bigl(%
\begin{smallmatrix}
u & u \\ 
u & v%
\end{smallmatrix}%
\bigr)\in \mathcal{M}(\mathbb{A})$ implies $u=v$;

\item \emph{abelian} if $\bigl(%
\begin{smallmatrix}
u & u \\ 
v & w%
\end{smallmatrix}%
\bigr)\in \mathcal{M}(\mathbb{A})$ implies $v=w$;

\item \emph{rectangular} if $\bigl(%
\begin{smallmatrix}
u & v \\ 
w & u%
\end{smallmatrix}%
\bigr)\in \mathcal{M}(\mathbb{A})$ implies $u=v=w$;

\item \emph{strongly abelian} if it is both abelian and rectangular.
\end{enumerate}
\end{definition}

All of these properties are inherited by subalgebras and direct products,
but not by homomorphic images. If $\mathbb{A}$ is a groupoid, and we apply (%
\ref{weakly abelian}) to $t(x,y)=xy$ then we get that whenever in the
multiplication table of $\mathbb{A}$ we see a configuration like this: 
\begin{equation*}
\begin{tabular}{l|lllll}
& $\cdots $ & $c$ & $\cdots $ & $d$ & $\cdots $ \\ \hline
$\vdots $ &  & $\vdots $ &  & $\vdots $ &  \\ 
$a$ & $\cdots $ & $u$ & $\cdots $ & $u$ & $\cdots $ \\ 
$\vdots $ &  & $\vdots $ &  & $\vdots $ &  \\ 
$b$ & $\cdots $ & $u$ & $\cdots $ & $v$ & $\cdots $ \\ 
$\vdots $ &  & $\vdots $ &  & $\vdots $ & 
\end{tabular}%
\end{equation*}%
then we must have $u=v$. Of course, this is just a necessary condition for $%
\mathbb{A}$ to be weakly abelian.

Minimal clones with abelian representations have been described by K.
Kearnes in \cite{KK}. Here we examine the analogous question for the other
three concepts. We will show that if a minimal clone has a nontrivial weakly
abelian representation, then it also has a nontrivial abelian
representation, and all representations are weakly abelian. From this result
we will easily deduce that if a minimal clone has a nontrivial rectangular
representation, then it also has a nontrivial strongly abelian
representation; moreover, all representations are strongly abelian.

\bigskip

\section{Preliminary results}

\bigskip

Minimal clones are generated by any of their nontrivial elements and it is
convenient to choose one of minimum arity. Such a generator must be one of
five types according to the following theorem of Rosenberg \cite{R 5typ}
(see also \cite{SzA clUA}).

\begin{theorem}
\label{THM 2.1}%
{\upshape{(\cite{R 5typ}).}}
Let $f$ be a nontrivial operation of minimum arity in a minimal clone. Then $%
f$ satisfies one of the following conditions:%
\renewcommand{\theenumi}{(\Roman{enumi})}
\renewcommand{\labelenumi}{\theenumi}%
\begin{enumerate}
\item \label{unary}$f$ is unary, and $f^{2}(x)=f(x)$, or $f^{p}(x)=x$ for
some prime $p$;

\item \label{binary}$f$ is a binary idempotent operation, i.e. $f(x,x)=x$;

\item \label{majority}$f$ is a ternary majority operation, i.e. $%
f(x,x,y)=f(x,y,x)=f(y,x,x)=\nolinebreak x$;

\item \label{minority}$f(x,y,z)=x+y+z$ for an elementary abelian 2-group
with addition $+$;

\item \label{semiprojection}$f$ is a semiprojection, i.e. there exists an $%
i\ (1\leq i\leq n)$ such that $\allowbreak f(x_{1},x_{2},\ldots
,x_{n})=x_{i} $ whenever the arguments are not pairwise distinct.
\end{enumerate}
\end{theorem}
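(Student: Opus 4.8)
The plan is to fix a generator $f$ of the clone of minimum arity $n$ among its nontrivial operations and to exploit repeatedly the fact that, by minimality of $n$, every operation of the clone of arity less than $n$ is a projection; in particular every operation obtained from $f$ by identifying a pair of variables is a projection. First I would dispose of the unary case $n=1$: the clone is then the transformation monoid generated by $f$ together with the identity, and minimality forces either $f^{2}=f$ or that $f$ generates a cyclic group of prime order, giving (I). For $n\geq 2$ the diagonal $d(x)=f(x,\dots,x)$ is a unary member of the clone; were it not the identity it would be a nontrivial operation of arity $1<n$, contradicting minimality, so $f$ is idempotent. This idempotency is what organizes the rest of the argument.

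With $f$ idempotent the case $n=2$ is immediate, since $f(x,x)=x$ is precisely (II). For $n\geq 4$ I would invoke \'Swierczkowski's lemma, which says that an at-least-$4$-ary operation all of whose two-variable identifications are projections is a semiprojection; since we have just observed that all these minors are projections, $f$ is a semiprojection, which is (V). The whole subtlety therefore concentrates in the ternary case $n=3$, where \'Swierczkowski's lemma genuinely fails, majority and minority operations being counterexamples.

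For $n=3$ I would analyze the three binary minors $f(x,x,y)$, $f(x,y,x)$, $f(y,x,x)$, each of which is a projection and hence returns either the repeated variable or the lone variable. If all three return the repeated variable, then $f(x,x,y)=f(x,y,x)=f(y,x,x)=x$, i.e.\ $f$ is a majority operation, giving (III). If the three minors split, some returning the repeated and some the lone variable, I expect to conclude that $f$ is a semiprojection, giving (V); this needs a direct combinatorial argument, since \'Swierczkowski is unavailable in arity $3$. The remaining, and hardest, case is when all three minors return the lone variable, so that $f(x,y,y)=f(y,y,x)=x$ and $f$ is a Mal'cev operation. Here one must manufacture genuine algebraic structure: fixing a base point and defining a binary operation from $f$, I would show that $(A;+)$ is an abelian group and that $f(x,y,z)=x-y+z$, the crux being the verification of the group laws, which is exactly where minimality of the clone must be used decisively. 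Finally, the middle minor forces characteristic $2$: since $f(x,y,x)$ must be a projection and equals $x-y+x=2x-y$ in the affine form, one needs $2x=2y$ identically, i.e.\ the group has exponent $2$, yielding $f(x,y,z)=x+y+z$ over an elementary abelian $2$-group, which is (IV). I expect this minority case, building the module and pinning down the exponent, to be the main obstacle; the mixed-minor semiprojection argument and \'Swierczkowski's lemma itself are the secondary technical hurdles.
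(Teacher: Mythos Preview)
The paper does not prove this theorem at all: it is stated with a citation to Rosenberg \cite{R 5typ} (and also \cite{SzA clUA}) and used as background, so there is no ``paper's own proof'' to compare against. Your sketch is essentially the standard outline of Rosenberg's argument --- unary case, idempotence for $n\ge 2$, \'Swierczkowski's lemma for $n\ge 4$, and a minor-pattern analysis for $n=3$ --- and as a plan it is sound.

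Two places deserve more care if you actually write it out. First, in the ternary mixed-minor case you only \emph{expect} to land in a semiprojection; in fact only the patterns with exactly two ``repeated'' minors and one ``lone'' minor are semiprojections, while the three patterns with two ``lone'' and one ``repeated'' are not, and must be eliminated by producing from $f$ a nontrivial operation of smaller arity (or a majority/minority), contradicting the minimality of $n$. That step is short but not automatic, and you should not leave it at ``a direct combinatorial argument''. Second, in the minority case your plan to fix a base point $0$, set $x+y=f(x,0,y)$, and verify the abelian-group axioms using minimality is correct in spirit; just be aware that associativity and the identity $f(x,y,z)=x-y+z$ genuinely require the clone hypothesis (mere Mal'cev-ness is not enough), so this is indeed where the minimality assumption does real work, as you anticipated. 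The exponent-$2$ conclusion from $f(x,y,x)=2x-y$ being a projection is fine.
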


A minimal clone cannot contain operations of two different types, therefore
we can speak about five types of minimal clones. Any representation of a
clone of type \ref{unary} is strongly abelian; any nontrivial representation
of a clone of type \ref{minority} is abelian, but not rectangular (hence not
strongly abelian). A minimal clone of type \ref{majority} or \ref%
{semiprojection} cannot have a nontrivial weakly abelian representation.
This is shown in Theorem 3.1 in \cite{KK}. (This theorem is about abelian
representations, but the proof actually shows that there is no weakly
abelian representation either.) Thus we have to consider clones of type \ref%
{binary} only.

To recall the results of \cite{KK}, we have to define several clones. By the
clone of an \textit{affine space} we mean the clone of all idempotent term
functions of a vector space over some field. This clone is minimal iff the
field is a $p$-element field for some prime number $p$. If $p>2$, then this
clone is of type \ref{binary}: any nontrivial operation of the form $\lambda
x+(1-\lambda )y$ generates the clone. If $p=2$ then the clone is of type \ref%
{minority}: the minority operation $x+y+z$ is a generator of minimum arity.

For any prime $p$, let us define the variety of \textit{$p$-cyclic groupoids}
by the identities $xx=x,x(yz)=xy,(xy)z=(xz)y,(\cdots ((xy)y)\cdots
)y=xy^{p}=x$. These groupoids have been introduced by P\l onka \cite{Plonka
k-cyc}; he also proved that they have minimal clones \cite{Plonka idred}. 
\textit{Rectangular bands} are idempotent semigroups satisfying $xyz=xz$,
and they have minimal clones, too.

Now we can describe all minimal clones with a nontrivial abelian
representation (Theorem 3.11 in \cite{KK}).

\begin{theorem}
\label{THM 2.2}%
{\upshape{(\cite{KK}).}}
The minimal clones which have a nontrivial abelian representation are the
following:%
\renewcommand{\theenumi}{(\roman{enumi})}
\renewcommand{\labelenumi}{\theenumi}%

\begin{enumerate}
\item the unary clone generated by an operation $f$ satisfying $f(x)=f(y)$,
but not satisfying $f(x)=x$;

\item the unary clone generated by an operation $f$ satisfying $%
f^{2}(x)=f(x) $, but not satisfying $f(x)=f(y)$ or $f(x)=x$;

\item the unary clone generated by an operation $f$ satisfying $f^{p}(x)=x$
for some prime $p$, but not satisfying $f(x)=x$;

\item the clone of any nontrivial rectangular band;

\item the clone of an affine space over a prime field;

\item the clone of any nontrivial $p$-cyclic groupoid (or its dual) for some
prime\nolinebreak\ $p$.
\end{enumerate}
\end{theorem}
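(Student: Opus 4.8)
The plan is to feed Rosenberg's classification (Theorem~\ref{THM 2.1}) through the observation that the abelian property is stronger than the weakly abelian one. Indeed, a weakly abelian configuration $\bigl(\begin{smallmatrix} u & u \\ u & v \end{smallmatrix}\bigr)\in\mathcal{M}(\mathbb{A})$ is the special case of the abelian configuration $\bigl(\begin{smallmatrix} u & u \\ v' & w' \end{smallmatrix}\bigr)$ with $v'=u$ and $w'=v$, so abelianness forces $u=v$. Hence a clone with no nontrivial weakly abelian representation has no nontrivial abelian representation either, and the result quoted from Theorem~3.1 of \cite{KK} removes types~\ref{majority} and~\ref{semiprojection}. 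It remains to examine the three surviving Rosenberg types.

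For type~\ref{unary} every representation is strongly abelian, hence abelian, so \emph{every} nontrivial unary minimal clone appears in the list; classifying the generator $f$ as a constant retraction, a non-constant idempotent retraction, or a permutation of prime order --- and discarding in each case the degenerate law $f(x)=x$ that would collapse the clone to the projections --- produces exactly items~(i)--(iii). For type~\ref{minority} every nontrivial representation is abelian, and the clone generated by $x+y+z$ on an elementary abelian $2$-group is the clone of the affine space over the two-element field, which is the $p=2$ instance of item~(v).

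The substance of the proof is type~\ref{binary}. Here I would fix a nontrivial idempotent groupoid $(A,\cdot)$ that is abelian and generates a minimal clone, and convert the matrix condition into identities. Applied to $t(x,y)=xy$ it already says that whether a right translation separates two elements does not depend on the left argument (and dually); substituting the iterated terms available in a minimal clone promotes these matrix statements to honest groupoid identities. I would then split according to the shape of the identities obtained: an associative case should yield the laws of a rectangular band (item~(iv)); a cyclic case, governed by laws of the form $x(yz)=xy$ and $(xy)z=(xz)y$, should yield a $p$-cyclic groupoid or its dual (item~(vi)), with $p$ read off from $xy^{p}=x$; and a module-like case should yield the affine space over a prime field with $p>2$ (the remaining part of item~(v)). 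The converse direction is then a check that each listed groupoid is abelian --- for a rectangular band $A=L\times R$, equality of the top row of a matrix forces equality of right coordinates, which immediately equalises the bottom row --- and that each generates a minimal clone, invoking P\l onka~\cite{Plonka idred} for the $p$-cyclic groupoids.

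I expect the binary case to be the main obstacle, in two respects. First, extracting workable identities from the matrix definition of abelianness, since the relevant polynomials are term operations of a clone whose shape is not yet known. Second, and harder, the use of minimality to show that these identities admit no groupoid outside the three advertised families; in particular it should be minimality, rather than abelianness alone, that forces the rigid cyclic laws $x(yz)=xy$ and $(xy)z=(xz)y$ in place of a weaker distributive law, and that pins the affine case to a \emph{prime} field.
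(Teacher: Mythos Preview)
The paper does not prove Theorem~\ref{THM 2.2}; it is quoted verbatim from \cite{KK} (Theorem~3.11 there) as background for the present investigation of \emph{weakly} abelian representations. There is therefore no proof in this paper to compare your proposal against.

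As for the proposal on its own merits: the reduction via Rosenberg's types and the disposal of types~\ref{unary}, \ref{majority}, \ref{minority} and~\ref{semiprojection} are correct and standard. But what you write for the binary case is not a proof; it is a description of what a proof should accomplish. Sentences such as ``substituting the iterated terms available in a minimal clone promotes these matrix statements to honest groupoid identities'' and ``I would then split according to the shape of the identities obtained'' do not specify which terms, which identities, or how the three-way split is effected, and you yourself flag these as the expected obstacles. The substantive content of Kearnes's argument lies exactly here: one first shows that an idempotent abelian groupoid is entropic (Theorem~3.2 of \cite{KK}, mentioned in this paper just before Lemma~\ref{LEMMA 3.1}), and then one classifies entropic groupoids with minimal clone --- the result recorded here as Theorem~\ref{THM 3.4} from \cite{KSz comm}. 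Your outline neither invokes nor reproduces either step, and without them there is no mechanism forcing an abelian binary minimal clone into exactly the three advertised families. You are also right that minimality, not abelianness alone, is what pins down the $p$-cyclic laws and the primality of the scalar field, but you give no indication of how to use it; in \cite{KK} this is handled through absorption identities and the faithfulness statement recorded here as Theorem~\ref{THM 2.3}.
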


The following interesting property of abelian representations has also been
proved in \cite{KK} with the help of absorption identities (see also \cite%
{LLPPP}).

\begin{theorem}
\label{THM 2.3}%
{\upshape{(\cite{KK}).}}
If a minimal clone has a nontrivial abelian representation, then this
representation is faithful.
\end{theorem}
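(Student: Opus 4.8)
The plan is to reformulate faithfulness as the injectivity of the surjective clone homomorphism $\rho\colon C\to\mathrm{Clo}(\mathbb{A})$ induced by the representation, where $C$ is the abstract minimal clone and $\mathbb{A}$ the given abelian algebra. Since $\mathbb{A}$ is nontrivial, $\mathrm{Clo}(\mathbb{A})$ is again a nontrivial minimal clone, so the task is to show that $\rho$ collapses no two distinct elements, i.e. that $\mathbb{A}$ satisfies no identity $s(\bar{x})=t(\bar{x})$ beyond those already holding in $C$. First I would invoke Theorem~\ref{THM 2.2} to split into the six listed families. The unary families (i)--(iii) and the affine family (v) I expect to dispatch directly: there a nontrivial representation carries a single unary map, respectively a module over the prime field $\mathbb{F}_{p}$, and distinctness of the clone elements reduces to the injectivity of $\lambda\mapsto\lambda v$ on $\mathbb{F}_{p}$ (for $v\neq 0$), or to the distinctness of the iterates $g,g^{2},\dots$ of a nonidentity map of prime order. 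Both follow from primality together with the nonidentities explicitly recorded in Theorem~\ref{THM 2.2}.

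The substance is therefore in the groupoid families, the nontrivial rectangular bands (iv) and the $p$-cyclic groupoids (vi). Here the key tool, as indicated in \cite{KK}, is \emph{absorption identities}: identities obtained from a term by substituting a fixed element of $A$ for some of its variables. The heart of the argument is a reduction step: if $\mathbb{A}$ is abelian and $\mathbb{A}\models s(\bar{x})=t(\bar{x})$, then this identity is already forced by the absorption identities of $\mathbb{A}$. I would establish this by fixing a base point and applying the matrix condition of Definition~\ref{DEF 1.1}, item~(\ref{weakly abelian}), through the term condition, so as to move the arguments of $s$ and $t$ toward the base point one block at a time: an equality obtained after setting a block of coordinates to the base point propagates, by abelianness, to an equality on arbitrary tuples. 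Iterating this lowers the number of coordinates on which the two operations can genuinely differ, until the full identity $s^{\mathbb{A}}=t^{\mathbb{A}}$ is pinned down by its one-variable restrictions, which are precisely what the absorption identities record.

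The final step is to observe that a nontrivial representation of a minimal clone cannot satisfy an absorption identity that fails in $C$. Such an extra absorption identity would exhibit a term operation of $\mathbb{A}$ as a projection on a nontrivial set of arguments; pushing this through the minimality of $\mathrm{Clo}(\mathbb{A})$---in which every nonprojection generates the whole clone---would either collapse $\mathbb{A}$ to a trivial algebra or contradict the type of the generator $f$ supplied by Theorem~\ref{THM 2.1}. Combining the reduction step with this observation gives $C\models s=t$ whenever $\mathbb{A}\models s=t$, so $\rho$ is injective and the representation is faithful.

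I expect the main obstacle to be the reduction step in the groupoid case. Because these clones possess no Mal'cev term---indeed no affine term---abelianness does not linearize the operations outright, and the coordinatewise propagation must be carried out using only the matrix condition, while tracking how the defining identities of $p$-cyclic groupoids ($x(yz)=xy$, $(xy)z=(xz)y$, $xy^{p}=x$) interact with the chosen base point. Verifying that these identities leave enough freedom to separate any two clone-distinct normal forms, and that no accidental absorption identity slips in, is the delicate part; the easy families and the primality bookkeeping I expect to be routine.
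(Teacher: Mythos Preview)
This theorem is quoted from \cite{KK} and not proved in the present paper; the only hint given is that the argument there proceeds ``with the help of absorption identities.'' So there is no in-paper proof to compare against line by line, but your plan can be checked against the logical structure of \cite{KK} as reflected here.

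The main defect is circularity. Your opening move is to invoke Theorem~\ref{THM 2.2} and treat the six families one at a time, but in \cite{KK} the classification (Theorem~3.11 there, Theorem~\ref{THM 2.2} here) is the \emph{endpoint}, and it is reached precisely by using faithfulness as a tool: once a nontrivial rectangular band, affine space, or $p$-cyclic groupoid is located in the variety, Theorem~\ref{THM 2.3} is what forces the whole variety to coincide with it. The paragraph immediately following Theorem~\ref{THM 2.3} in this paper makes that dependence explicit (``As a special case of this theorem we have\ldots''), and the same device is invoked repeatedly throughout Sections~3 and~4. So you cannot assume the classification in order to establish faithfulness.

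The absorption-identity mechanism you outline in your second and third paragraphs is the intended tool, and it is meant to be run \emph{uniformly}, with no prior case split by family. You already have the key observation in your third paragraph: a nontrivial representation $\mathbb{A}$ of a minimal clone $C$ cannot satisfy an absorption identity $p(\bar{x})=x_{i}$ that fails in $C$, since then a nontrivial $p\in C$ would act as a projection on $\mathbb{A}$ and $\mathrm{Clo}(\mathbb{A})$ would collapse. What remains is to use abelianness to show that any identity of $\mathbb{A}$ is already forced by the absorption identities it satisfies; your ``move toward the base point one block at a time'' sketch points at the right term-condition manipulation, but that step has to be carried out once, abstractly, not separately for rectangular bands and $p$-cyclic groupoids. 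Drop the case split and your outline becomes the argument of \cite{KK}.
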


As a special case of this theorem we have that if a variety $\mathcal{V}$
has a minimal clone and it contains a nontrivial rectangular band or affine
space, then $\mathcal{V}$ must be the variety of rectangular bands or a
variety of affine spaces. From the proof it is clear that the same is true
for $p$-cyclic groupoids too, although not all of them are abelian, as we
will see in the last section.

\bigskip

\section{Weak abelianness and distributivity}

\bigskip

In the theory of groupoids and quasigroups a different notion of `weak
abelianness' is defined by the identities 
\begin{equation}
(xx)(yz)=(xy)(xz),\qquad (yz)(xx)=(yx)(zx),  \tag{$\ast $}
\end{equation}%
and a groupoid is called `abelian' (or medial, or entropic) if $%
(xy)(zu)=(xz)(yu)$ holds (see \cite{Kepka wa qgr}). To avoid confusion with
the universal algebraic definitions, we will use the word \textit{entropic}
in the latter case. Minimal clones are always idempotent, and in this case
the identities $(\mathbb{\ast })$ are equivalent to the \textit{distributive
identities}:

\begin{tabular}{ll}
$\text{\emph{Left distributivity:}}$ & $x(yz)=(xy)(xz),$ \\ 
\emph{Right}$\text{\emph{\ distributivity:}}$ & $(yz)x=(yx)(zx).$%
\end{tabular}

\noindent Any idempotent abelian groupoid is entropic (\cite{KK}, Theorem
3.2), and one might expect that idempotent weakly abelian groupoids are
distributive. We do not know if this is true or not, but for our present
purposes the weaker properties stated in the next two lemmas are sufficient.

\begin{lemma}
\label{LEMMA 3.1}If $\mathbb{A}$ is an idempotent weakly abelian groupoid,
then $uv_{1}=uv_{2}=w$ implies $u(v_{1}v_{2})=w$, i.e. $\{v\mid uv=w\}$ is a
subuniverse for any given $u,w\in \mathbb{A}$.
\end{lemma}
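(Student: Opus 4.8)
The plan is to reduce the statement to a single closure step and then to force that step by one application of weak abelianness. Since $\mathbb{A}$ is a groupoid, a subset is a subuniverse exactly when it is closed under the one basic operation, so it suffices to prove the implication $uv_{1}=uv_{2}=w\ \Rightarrow\ u(v_{1}v_{2})=w$ for arbitrary $v_{1},v_{2}$ with $uv_{i}=w$. To get this from Definition~\ref{DEF 1.1}(\ref{weakly abelian}) I would try to exhibit a matrix in $\mathcal{M}(\mathbb{A})$ of the shape $\left(\begin{smallmatrix} w & w \\ w & u(v_{1}v_{2})\end{smallmatrix}\right)$, whereupon weak abelianness immediately yields $u(v_{1}v_{2})=w$. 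Two elementary closure properties of $\mathcal{M}(\mathbb{A})$ are worth recording first, since I will lean on them: $\mathcal{M}(\mathbb{A})$ is stable under swapping the two rows (interchange $\mathbf{a},\mathbf{b}$) and the two columns (interchange $\mathbf{c},\mathbf{d}$), so the deviating entry may be placed in any corner; and $\mathcal{M}(\mathbb{A})$ is stable under entrywise application of the groupoid operation (concatenate the parameter tuples and multiply the two terms), so from any $M\in\mathcal{M}(\mathbb{A})$ one obtains $uM\in\mathcal{M}(\mathbb{A})$ by left‑multiplying every entry by the constant $u$.

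The first concrete attempt is the polynomial $t(x,y)=u(xy)$, evaluated with both rows and both columns taken from $\{v_{1},v_{2}\}$. Idempotence gives $t(v_{1},v_{1})=u(v_{1}v_{1})=uv_{1}=w$ and $t(v_{2},v_{2})=u(v_{2}v_{2})=uv_{2}=w$, so this produces the matrix $\left(\begin{smallmatrix} w & u(v_{1}v_{2}) \\ u(v_{2}v_{1}) & w\end{smallmatrix}\right)\in\mathcal{M}(\mathbb{A})$. Thus the two diagonal entries are automatically $w$, and the whole problem is to realize a \emph{third} entry as $w$ as well, after which weak abelianness finishes.

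The hard part will be exactly this third coincidence. With the minimal information available, the only inner products that are forced to lie in $S=\{v\mid uv=w\}$ are the idempotent ones $v_{i}v_{i}=v_{i}$, and in a $2\times 2$ pattern with distinct rows and distinct columns at most two cells can be of this diagonal type; every symmetric attempt to manufacture a further $w$ re‑introduces one of the ``cross'' products $u(v_{1}v_{2})$ or $u(v_{2}v_{1})$, neither of which is known to be in $S$ (and proving one from the other is circular, as they are two instances of the very statement being proved). The essential content must therefore come from using the hypothesis $uv_{1}=uv_{2}$ in a genuinely non‑diagonal way. Two routes I would pursue to break the symmetry: (i) find an element $a$ with $av_{1},av_{2}\in S$ and then read off a third $w$ from the row $t(a,\cdot)$ of $t(x,y)=u(xy)$, the natural candidates being produced by feeding the equality $uv_{1}=uv_{2}$ through a two‑coordinate polynomial such as $t(\mathbf{x},\mathbf{y})=u\bigl((x_{1}y_{1})(x_{2}y_{2})\bigr)$; or (ii) build, using idempotence and the hypothesis, a matrix $\left(\begin{smallmatrix} p & p \\ p & v_{1}v_{2}\end{smallmatrix}\right)\in\mathcal{M}(\mathbb{A})$ with $p\in S$, and then left‑multiply entrywise by $u$ to obtain $\left(\begin{smallmatrix} w & w \\ w & u(v_{1}v_{2})\end{smallmatrix}\right)$. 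Either way the crux — and the step I expect to require the real idea — is forcing that third entry into $S$; once it is in place, Definition~\ref{DEF 1.1}(\ref{weakly abelian}) delivers $u(v_{1}v_{2})=w$ and hence the claimed subuniverse.
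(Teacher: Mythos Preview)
Your overall plan is exactly the paper's: exhibit a matrix $\bigl(\begin{smallmatrix} w & w \\ w & u(v_{1}v_{2})\end{smallmatrix}\bigr)\in\mathcal{M}(\mathbb{A})$ and invoke Definition~\ref{DEF 1.1}\ref{weakly abelian}. But you do not actually produce such a matrix. You correctly observe that the two-variable polynomial $u(xy)$ only pins down two $w$'s, and then you stop, calling the third $w$ ``the step I expect to require the real idea.'' That step \emph{is} the entire content of the lemma, so the proposal as written is incomplete.

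The missing idea is to stop treating $u$ as a constant and instead let the \emph{left} factor of the product vary with the row. Take $t(x_{1},x_{2},x_{3},x_{4})=(x_{1}x_{2})(x_{3}x_{4})$ with $\mathbf{a}=(u,v_{1},u)$, $\mathbf{b}=(u,u,v_{1})$, $\mathbf{c}=v_{1}$, $\mathbf{d}=v_{2}$. In row $\mathbf{a}$ the left factor is $uv_{1}=w$, so both entries are $w\cdot(uv_{i})=w\cdot w=w$; in row $\mathbf{b}$ the left factor is $uu=u$ by idempotence, so the entries are $u\cdot(v_{1}v_{1})=uv_{1}=w$ and $u\cdot(v_{1}v_{2})$. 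This yields the desired matrix in a single application. Your route~(ii) is unlikely to work as stated: it requires three corners to contain some $p\in S$, and the natural candidates (e.g.\ $p=w=uv_{i}$) would force $uw=w$, which is not assumed. The paper's trick avoids this by arranging the top row to equal $w$ only \emph{after} the outer multiplication, via $w\cdot w=w$, rather than by placing an element of $S$ there.
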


\begin{proof}
Applying the definition of weak abelianness with $\mathbf{a}=(u,v_{1},u),%
\mathbf{b}=(u,u,v_{1})$, $\mathbf{c}=v_{1},\mathbf{d}=v_{2}$ for $%
t(x_{1},x_{2},x_{3},x_{4})=(x_{1}x_{2})(x_{3}x_{4})$ we get 
\begin{equation*}
\begin{pmatrix}
(uv_{1})(uv_{1}) & (uv_{1})(uv_{2}) \\ 
(uu)(v_{1}v_{1}) & (uu)(v_{1}v_{2})%
\end{pmatrix}%
=%
\begin{pmatrix}
ww & ww \\ 
uv_{1} & u(v_{1}v_{2})%
\end{pmatrix}%
=%
\begin{pmatrix}
w & w \\ 
w & u(v_{1}v_{2})%
\end{pmatrix}%
\in \mathcal{M}(\mathbb{A}),
\end{equation*}%
hence $u(v_{1}v_{2})=w$.
\end{proof}

\begin{lemma}
\label{LEMMA 3.2}Any idempotent weakly abelian groupoid satisfies the
following identities:%
\renewcommand{\theenumi}{(\roman{enumi})}
\renewcommand{\labelenumi}{\theenumi}%

\begin{enumerate}
\item \label{LEMMA 3.2 i}$(xy)(xz)=(x(yz))((xy)(xz))$;

\item \label{LEMMA 3.2 ii}$(yx)(zx)=((yx)(zx))((yz)x)$;

\item \label{LEMMA 3.2 iii}$(xy)x=x(yx)$.
\end{enumerate}
\end{lemma}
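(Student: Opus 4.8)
The plan is to prove \ref{LEMMA 3.2 i} by a single application of weak abelianness, to obtain \ref{LEMMA 3.2 ii} as its mirror image, and then to deduce \ref{LEMMA 3.2 iii} from \ref{LEMMA 3.2 i} and \ref{LEMMA 3.2 ii} by a substitution.

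First I would fix elements $x,y,z$ and abbreviate $w=(xy)(xz)$ and $m=x(yz)$, so that the content of \ref{LEMMA 3.2 i} is the single equation $mw=w$. Following the pattern of Lemma \ref{LEMMA 3.1}, I would feed the term $t(x_1,x_2,x_3,x_4)=(x_1x_2)(x_3x_4)$ into the definition of $\mathcal{M}(\mathbb{A})$ and choose the four argument tuples among $x,y,z$ and their products so that the exceptional entry is exactly $mw=(x(yz))((xy)(xz))$, while the three reference entries collapse to $w$. Since $\mathcal{M}(\mathbb{A})$ is closed under interchanging the two rows (swap $\mathbf{a}\leftrightarrow\mathbf{b}$) and under interchanging the two columns (swap $\mathbf{c}\leftrightarrow\mathbf{d}$), the implication in Definition \ref{DEF 1.1}(\ref{weakly abelian}) applies regardless of which corner carries the exceptional entry: once three entries agree, the fourth is forced to agree too, and this gives $mw=w$.

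I expect the construction of this matrix to be the crux. The naive attempt---keeping the right-hand factor equal to $w$ throughout and letting the left-hand factor run through $w,w,w,m$---cannot work, because the four left-hand factors would then themselves constitute a matrix in $\mathcal{M}(\mathbb{A})$ with three entries $w$ and one entry $m$, whence weak abelianness would force the left-distributive law $m=w$, which need not hold. So the three reference entries must reduce to $w$ not by brute idempotence but through genuine coincidences of distinct subterms, just as the hypothesis $uv_1=uv_2=w$ produces the collapses in Lemma \ref{LEMMA 3.1}. Accordingly I would arrange the tuples so that at the reference corners equal arguments meet and idempotence yields $pp=p$, possibly invoking Lemma \ref{LEMMA 3.1} once to secure an auxiliary equality of two products before the final weak-abelian step.

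With \ref{LEMMA 3.2 i} established, the remaining two parts are cheap. The properties of being idempotent and weakly abelian are both self-dual, so they pass to the dual groupoid $a*b:=ba$; applying \ref{LEMMA 3.2 i} in the dual and translating back by reversing every product turns $(xy)(xz)=(x(yz))((xy)(xz))$ into $(yx)(zx)=((yx)(zx))((yz)x)$ after renaming variables, which is \ref{LEMMA 3.2 ii}. Finally, for \ref{LEMMA 3.2 iii} I would substitute $z:=x$ in \ref{LEMMA 3.2 i} and $y:=x$ in \ref{LEMMA 3.2 ii}; using $xx=x$ these specialisations become
\begin{equation*}
(xy)x=(x(yx))((xy)x),\qquad x(yx)=(x(yx))((xy)x).
\end{equation*}
The two right-hand sides are literally the same expression, hence the two left-hand sides coincide, giving $(xy)x=x(yx)$, as required.
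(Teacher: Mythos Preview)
Your handling of \ref{LEMMA 3.2 ii} and \ref{LEMMA 3.2 iii} is fine and matches the paper exactly: dualise for \ref{LEMMA 3.2 ii}, then specialise \ref{LEMMA 3.2 i} with $z=x$ and \ref{LEMMA 3.2 ii} with $y=x$ (and rename) to obtain two expressions with identical right-hand sides, forcing $(xy)x=x(yx)$.

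The gap is in \ref{LEMMA 3.2 i}. You correctly diagnose that the $4$-ary term $(x_{1}x_{2})(x_{3}x_{4})$ with the naive assignment fails, but you never actually produce a working matrix; ``arrange the tuples so that equal arguments meet, possibly invoking Lemma~\ref{LEMMA 3.1}'' is a hope, not a construction. In fact four slots are too few: to place $mw=(x(yz))((xy)(xz))$ in one corner you already need the left factor to be a product and the right factor to be a product of products, and there is no way left to make the other three corners collapse to $w$ by idempotence alone without forcing $m=w$, which is precisely the obstruction you identified. The paper's idea is to raise the arity: use the $8$-ary term $t(x_{1},\ldots,x_{8})=((x_{1}x_{2})(x_{3}x_{4}))((x_{5}x_{6})(x_{7}x_{8}))$ and let positions $4,6,7$ carry $\mathbf{c}=(y,x,z)$ versus $\mathbf{d}=(z,y,x)$, while positions $1,2,3,5,8$ carry $\mathbf{a}=(x,y,x,x,z)$ versus $\mathbf{b}=(x,x,y,x,z)$. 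Then the three reference corners become $((xy)(xy))((xx)(zz))$, $((xy)(xz))((xy)(xz))$ and $((xx)(yy))((xx)(zz))$, each of which reduces to $(xy)(xz)$ by idempotence, and the fourth corner is $((xx)(yz))((xy)(xz))=(x(yz))((xy)(xz))$. One application of weak abelianness then gives \ref{LEMMA 3.2 i}. The extra four slots are exactly what buys you the independent collapses you were looking for.
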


\begin{proof}
Let $\mathbb{A}$ be an idempotent weakly abelian groupoid. To prove \ref%
{LEMMA 3.2 i}, we will use the 8-ary term $((\cdot \cdot )(\cdot \cdot
))((\cdot \cdot )(\cdot \cdot ))$; the underlined letters show the entries
occupied by $\mathbf{c}$ and $\mathbf{d}$ in the definition. We have%
\begin{multline*}
\begin{pmatrix}
((xy)(x\underline{y}))((x\underline{x})(\underline{z}z)) & \!\!((xy)(x%
\underline{z}))((x\underline{y})(\underline{x}z)) \\ 
((xx)(y\underline{y}))((x\underline{x})(\underline{z}z)) & \!\!((xx)(y%
\underline{z}))((x\underline{y})(\underline{x}z))%
\end{pmatrix}
\\
=%
\begin{pmatrix}
(xy)(xz) & \hfill (xy)(xz) \\ 
(xy)(xz) & \!\!(x(yz))(xy)(xz)%
\end{pmatrix}%
\in \mathcal{M}(\mathbb{A}),
\end{multline*}%
therefore the equality in \ref{LEMMA 3.2 i} holds. Doing the same with the
dual $\langle A,yx\rangle $ of $\mathbb{A}=\langle A,xy\rangle $, which is
of course also weakly abelian, we obtain the second identity. We could
derive the third identity in a similar manner, but it is easier to deduce it
from the previous ones. If we put $z=x$ in \ref{LEMMA 3.2 i} we get $%
(xy)x=(x(yx))((xy)x)$; writing $y=x$ and $z=y$ in \ref{LEMMA 3.2 ii} yields $%
x(yx)=(x(yx))((xy)x)$; comparing them gives \ref{LEMMA 3.2 iii}.
\end{proof}

In light of the last identity we will sometimes omit the parentheses in a
product of the form $xyx$. To make the connection between distributivity and
weak abelianness more explicit, we will define a relation $\sim $ on our
groupoid by $a\sim b$ iff $ab=a$. Identity \ref{LEMMA 3.2 ii} says that $%
\mathbb{A}$ is right distributive `modulo $\sim $'. This does not make
perfect sense yet, since $\sim $ may not be an equivalence relation. Our
strategy will be to reduce the problem to the case when $\sim $ is a
congruence relation. As a preparation, we first show that assuming that the
clone of $\mathbb{A}$ is minimal, we can conclude that $\mathbb{A}$
satisfies at least one-sided distributivity.

\begin{lemma}
\label{LEMMA 3.3}A weakly abelian groupoid with a minimal clone must satisfy
at least one of the distributive laws.
\end{lemma}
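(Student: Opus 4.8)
The plan is to argue by contradiction, assuming that \emph{neither} the left distributive law $x(yz)=(xy)(xz)$ \emph{nor} the right distributive law $(yz)x=(yx)(zx)$ holds identically in $\mathbb{A}$, and to manufacture from this a forbidden weakly abelian configuration $\bigl(\begin{smallmatrix}u&u\\ u&v\end{smallmatrix}\bigr)\in\mathcal{M}(\mathbb{A})$ with $u\neq v$. Writing $p=x(yz)$, $q=(xy)(xz)$, $L=(yz)x$ and $R=(yx)(zx)$, Lemma \ref{LEMMA 3.2} already supplies the ``one\,--\,sided modulo $\sim$'' relations $pq=q$ and $RL=R$; in terms of $a\sim b\iff ab=a$ the second reads $R\sim L$, and dually the first says that $q$ is fixed under left multiplication by $p$. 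Thus each distributive law can fail only in the weak sense that its two sides are $\sim$\,--\,related rather than equal, and the whole task is to upgrade one of these to a genuine equality.

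First I would extract the consequences of minimality. Since the clone of $\mathbb{A}$ is minimal of type \ref{binary}, every binary term operation is either a projection or a generator of the entire clone (a nontrivial binary term generates a nontrivial subclone, which by minimality is everything). In particular the well\,--\,defined term $xyx$ of Lemma \ref{LEMMA 3.2}\ref{LEMMA 3.2 iii} is $x$, is $y$, or is a generator, and I would split along these cases. When $xyx=x$ or $xyx=y$ the groupoid obeys a strong absorption or cancellation law, and I expect that combining this with idempotence, with Lemma \ref{LEMMA 3.1} (so that every set $\{v:uv=w\}$ is a subuniverse), and with the two relations above forces one of the distributive laws to hold outright. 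The substantive case is the one where $xyx$ is itself a generator, so that the clone is generated by $xyx$ as well as by $xy$.

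In that last case the strategy is to feed the hypothetical defects back into the four\,--\,place term $t(x_1,x_2,x_3,x_4)=(x_1x_2)(x_3x_4)$ already used in the proof of Lemma \ref{LEMMA 3.2}, choosing the argument vectors $\mathbf{a},\mathbf{b},\mathbf{c},\mathbf{d}$ so that three of the four matrix entries collapse to a common value $u$ while the fourth records a distributivity defect $v$; weak abelianness then forces $u=v$, i.e. a distributive law. The relations $pq=q$ and $RL=R$ are exactly what collapses the three entries, and minimality (expressing $xy$ through $xyx$ and conversely) is what should guarantee that the remaining entry is a genuine defect rather than being absorbed automatically. I expect the main obstacle to lie precisely here: arranging the polynomial and its arguments so that the designed $2\times2$ matrix really belongs to $\mathcal{M}(\mathbb{A})$ and really has the shape $\bigl(\begin{smallmatrix}u&u\\ u&v\end{smallmatrix}\bigr)$, and—most delicately—ruling out that \emph{both} defects persist simultaneously. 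That last point is what yields \emph{at least one} law instead of both, and it is where weak abelianness, rather than mere idempotence, must be used decisively.
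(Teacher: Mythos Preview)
Your proposal is a plan rather than a proof, and the plan is missing the idea that actually makes the argument close. You aim to manufacture, directly from the two defects $p\neq q$ and $L\neq R$, a single matrix $\bigl(\begin{smallmatrix}u&u\\u&v\end{smallmatrix}\bigr)\in\mathcal{M}(\mathbb{A})$ with $u\neq v$; but you never exhibit such a matrix, and your case split on whether $xyx$ is a projection or a generator does not obviously help produce one. In the ``projection'' cases you only \emph{expect} that $xyx=x$ or $xyx=y$ forces a distributive law, and in the ``generator'' case you admit that arranging the arguments so that three entries collapse while the fourth records a genuine defect is the main obstacle. None of these steps is carried out, so as it stands the argument has a real gap.

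The paper's proof takes a quite different, more structural route and does not try to kill both defects with a single matrix. From the failure of right distributivity one picks specific elements $a=(yx)(zx)\neq(yz)x=b$; Lemma~\ref{LEMMA 3.2}\ref{LEMMA 3.2 ii} gives $ab=a$. A short case analysis (using Lemma~\ref{LEMMA 3.1}, Lemma~\ref{LEMMA 3.2}\ref{LEMMA 3.2 iii}, and one concrete $2\times2$ weak-abelian computation, together with minimality applied to the term $x(xy)$ rather than to $xyx$) then produces a two-element \emph{left zero} subsemigroup of $\mathbb{A}$. Dualizing the whole paragraph, the failure of left distributivity yields a two-element \emph{right zero} subsemigroup. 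Their direct product is a nontrivial rectangular band in $\mathcal{V}(\mathbb{A})$, and now Theorem~\ref{THM 2.3} (faithfulness of nontrivial abelian representations) forces $\mathbb{A}$ itself to be a rectangular band --- contradicting the assumption, since rectangular bands are distributive. The decisive ingredients you are missing are therefore (a) working with concrete elements and hunting for a two-element one-sided zero subsemigroup rather than for a global forbidden matrix, and (b) the appeal to Theorem~\ref{THM 2.3} to globalize the local information. Without something playing the role of Theorem~\ref{THM 2.3}, it is not clear your direct-matrix strategy can be completed.
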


\begin{proof}
Suppose that $\mathbb{A}$ is a weakly abelian groupoid with a minimal clone,
and $\mathbb{A}$ is neither left nor right distributive. First we will show
that there is a two-element left zero semigroup in $\mathcal{V}(\mathbb{A})$%
. Since $\mathbb{A}$ is not right distributive, we can find elements $x,y,z$
such that $b=(yz)x\neq (yx)(zx)=a$. The second identity of Lemma \ref{LEMMA
3.2} shows that $ab=a$. If $ba=b$, then $\{a,b\}$ is a two-element left zero
subsemigroup of $\mathbb{A}$. If $ba\neq b$, then let $c$ denote the product 
$ba$, which is different from $a$ by the weak abelian property (see the
figure after Definition \ref{DEF 1.1}). We have $ab=aa=a$, so Lemma \ref%
{LEMMA 3.1} yields that $a=a(ba)=ac$. With the help of identity \ref{LEMMA
3.2 iii} of Lemma \ref{LEMMA 3.2} we can compute $cb=(ba)b=b(ab)=ba=c$. Thus
we have the following part in the multiplication table of $\mathbb{A}$.%
\begin{equation*}
\begin{tabular}{l|lll}
& $a$ & $b$ & $c$ \\ \hline
$a$ & $a$ & $a$ & $a$ \\ 
$b$ & $c$ & $b$ &  \\ 
$c$ &  & $c$ & $c$%
\end{tabular}%
\end{equation*}%
If $bc=b$, then again we have a two-element left zero subsemigroup, $\{b,c\}$%
. Suppose therefore that $bc\neq b$. Then $x(xy)$ is a nontrivial operation,
since $a(ab)=aa=a\neq b$ and $b(ba)=bc\neq b$. However, the operation $x(xy)$
is trivial on the set $\{a,c\}$. The only entry which we need to verify is $%
c(ca)=c$. We can get this equality by simply applying the definition of weak
abelianness on the following matrix:%
\begin{equation*}
\begin{pmatrix}
c(\underline{b}b) & c(\underline{c}b) \\ 
c(\underline{b}a) & c(\underline{c}a)%
\end{pmatrix}%
=%
\begin{pmatrix}
c & c \\ 
c & c(ca)%
\end{pmatrix}%
\in \mathcal{M}(\mathbb{A}).
\end{equation*}%
Therefore any operation in the clone generated by $x(xy)$ is a first
projection on $\{a,c\}$, and the original multiplication must be in this
clone since it was supposed to generate a minimal clone. Thus we have $ca=c$%
, that is, $\{a,c\}$ is a two-element left zero subsemigroup.

Passing from $\mathbb{A}$ to its dual, which is not left or right
distributive (since $\mathbb{A}$ itself is not right or left distributive)
we see from the fact proved in the preceding paragraph that $\mathbb{A}$
also has a two-element right zero subsemigroup. The product of these two is
a nontrivial rectangular band in $\mathcal{V}(\mathbb{A})$, therefore
Theorem \ref{THM 2.3} implies that $\mathbb{A}$ itself is a rectangular
band. This is a contradiction, since rectangular bands are distributive.
\end{proof}

With the help of Lemma \ref{LEMMA 3.3} we will be able to handle all cases
where $\sim $ is not a congruence relation, and finally we will arrive at
the quotient groupoid $\mathbb{A}/\!\!\sim $, which will turn out to be
distributive. This will be a rather lengthy argument, so we postpone it to
the next section. Here we give the characterization of distributive
groupoids with a minimal clone, which we will need to analyse $\mathbb{A}%
/\!\!\sim $. We will use the classification of entropic groupoids with a
minimal clone (cf.\cite{KSz comm}). To state this result, we need to define
the following varieties.

An idempotent semigroup is called a \textit{left normal band} if it
satisfies the identity $xyz=xzy$; similarly \textit{right normal bands} are
those satisfying the identity $xyz=yxz$. The variety of \textit{normal bands}
is the join of these two varieties. A groupoid is called a \textit{right
semilattice} if it satisfies the identities $xx=x, x(yz)=xy, (xy)z=(xz)y$
and $(xy)y=xy$. The dual of a right semilattice is a \textit{left semilattice%
}.

Now we can describe the entropic groupoids which have a minimal clone. (Note
that the statement is slightly different from Theorem 3.20 in \cite{KSz comm}%
, because here we formulate the description in terms of concrete clones
instead of abstract clones.)

\begin{theorem}
\label{THM 3.4}%
{\upshape{(\cite{KSz comm}).}}
Let $\mathbb{A}$ be an entropic groupoid with a minimal clone. Then $\mathbb{%
A}$ or its dual is an affine space, a rectangular band, a left normal band,
a right semilattice or a p-cyclic groupoid.
\end{theorem}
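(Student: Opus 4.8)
The plan is to start from the fact that an idempotent entropic groupoid is automatically distributive on both sides: from $x(yz)=(xx)(yz)=(xy)(xz)$ and $(yz)x=(yz)(xx)=(yx)(zx)$ the two distributive laws fall out by a single use of mediality together with $xx=x$. Hence every left translation $L_{a}\colon x\mapsto ax$ and every right translation $R_{a}\colon x\mapsto xa$ is an endomorphism of $\mathbb{A}$. A short preliminary step disposes of the degenerate possibilities of Theorem~\ref{THM 2.1}: were $xy$ independent of one of its arguments it would, being idempotent, be a projection and $\mathbb{A}$ would be trivial, so $xy$ genuinely depends on both variables and we are in case~\ref{binary}. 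Since the dual groupoid $\langle A,yx\rangle$ is again idempotent, entropic and has a minimal clone, it suffices to determine $\mathbb{A}$ up to duality, which is what the phrase ``$\mathbb{A}$ or its dual'' in the statement accounts for. The whole problem thus reduces to pinning down the binary clone generated by $xy$.

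The organising device I would use is the cancellativity of the translations. Using minimality one first shows that left cancellativity is an all-or-nothing phenomenon (some nontrivial $L_{a}$ is injective iff every nontrivial $L_{a}$ is), and likewise for the right; this gives a clean trichotomy according to whether $0$, $1$ or $2$ sides are cancellative. If both sides are cancellative, then $\mathbb{A}$ is an idempotent medial quasigroup, and the classical coordinatisation of such quasigroups (a Toyoda-type theorem) represents it as $xy=\varphi(x)+(1-\varphi)(y)$ over an abelian group with $\varphi$ an automorphism; minimality then forces the group to be a vector space over a prime field and $\varphi$ to be a scalar, producing exactly the affine-space case.

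In the one-sided case, say $R_{a}$ injective while the $L_{a}$ collapse, the right translations are injective endomorphisms; mediality together with the collapse on the left forces them to commute, which is precisely the identity $(xy)z=(xz)y$, and the collapse itself yields $x(yz)=xy$. Minimality then bounds the exponent of the (commuting) group they generate by a prime $p$, giving $xy^{p}=x$, i.e. a $p$-cyclic groupoid (or its dual). In the zero-sided case both sides collapse, and one splits further: if the operation is associative, $\mathbb{A}$ is a band, and the bands with minimal clone are the rectangular bands and the left/right normal bands; if it is not associative, the asymmetric collapse is analysed exactly as in the proof of Lemma~\ref{LEMMA 3.3}—each of the terms $x(xy)$, $(xy)y$, $xyx$ being forced to be a projection on the relevant two-element subsets or to coincide with the generator—and one is left with a right (or left) semilattice. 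Throughout this case Theorem~\ref{THM 2.3} is the key lever: whenever the analysis exhibits a nontrivial rectangular band or affine subspace inside $\mathcal{V}(\mathbb{A})$, it upgrades this to a statement about $\mathbb{A}$ itself, so no genuinely mixed structure can survive.

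The main obstacle, as I see it, lies in three places: establishing the uniformity of cancellativity from minimality; deriving the precise defining identities (and the prime-exponent conclusion) in the one- and zero-sided cases rather than merely their qualitative shape; and proving exhaustiveness of the zero-sided subclassification, together with the duality bookkeeping that is responsible for pairing left normal bands with right normal bands and right with left semilattices. It is worth noting that the abelian members of the final list—affine spaces, rectangular bands and $p$-cyclic groupoids—are exactly the groupoids already isolated in Theorem~\ref{THM 2.2}, whereas the left normal bands and right semilattices are the genuinely non-abelian contributions (indeed not even weakly abelian, as the case of semilattices shows) and must be recognised by direct verification of their identities. A fully rigorous treatment of all of this is carried out in \cite{KSz comm}.
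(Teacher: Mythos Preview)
The paper does not prove Theorem~\ref{THM 3.4}; it is quoted verbatim from \cite{KSz comm} with no argument beyond the citation (the only authorial remark is that the formulation has been adjusted from abstract to concrete clones). Your proposal therefore already supplies more than the paper does: a plausible outline of the classification, an honest list of the obstacles you do not work out, and a concluding deferral to \cite{KSz comm} for the rigorous treatment. Since there is no in-paper proof to compare against, the only meaningful comparison would be with the Kearnes--Szendrei argument itself, which is not reproduced here.

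One substantive caution about your sketch, should you want to develop it: two-sided cancellativity does not by itself yield a quasigroup on an infinite carrier, so the step from ``both translations injective'' to ``idempotent medial quasigroup'' and thence to the Toyoda coordinatisation needs either a finiteness hypothesis, a surjectivity argument, or a different entry point. Your ``all-or-nothing'' claim for cancellativity under minimality likewise needs a precise statement before it can carry weight. You flag exactly these points among your obstacles, so this is not a gap in the proposal as written, but it is where the real work in \cite{KSz comm} lies.
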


Let us turn to the investigation of distributive groupoids with a minimal
clone. It was shown in \cite{Kepka-Nemec} that every distributive groupoid
is trimedial, i.e. any subgroupoid generated by at most three elements is
entropic. The next theorem shows that the distributive and entropic
properties are equivalent for groupoids with a minimal clone.

\begin{theorem}
\label{THM 3.5}If $\mathbb{A}$ is a distributive groupoid with a minimal
clone, then the entropic law holds in $\mathbb{A}$.
\end{theorem}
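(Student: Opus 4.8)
The plan is to bootstrap from the local mediality that distributivity already supplies to the full entropic law, using the classification in Theorem \ref{THM 3.4} to control the possible local behaviour. First I would record that $\mathbb{A}$ is idempotent (minimal clones are always idempotent), so the result quoted above (\cite{Kepka-Nemec}) applies: $\mathbb{A}$ is trimedial, i.e. the entropic identity $(xy)(zu)=(xz)(yu)$ holds under every substitution using at most three distinct elements. Equivalently, the two $4$-ary term operations $s=(x_1x_2)(x_3x_4)$ and $t=(x_1x_3)(x_2x_4)$ agree on every tuple of $A^{4}$ that is not pairwise distinct, and the theorem amounts to forcing them to agree on pairwise distinct tuples as well. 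A trivial $\mathbb{A}$ is a left or right zero semigroup, which is entropic, so I may assume $\mathbb{A}$ is nontrivial and has a genuinely minimal clone.

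Since an identity in at most three variables holds in $\mathbb{A}$ iff it holds in the free groupoid $\mathbb{F}=\mathbb{F}_{\mathcal{V}(\mathbb{A})}(3)$, the key object is $\mathbb{F}$. It is trimedial, hence entropic, and its clone is a restriction of the clone of $\mathbb{A}$; such a restriction is minimal or trivial, and it cannot be trivial (that would force $xy=x$ or $xy=y$ in $\mathbb{A}$). So $\mathbb{F}$ has a minimal clone, and Theorem \ref{THM 3.4} applies: either $\mathbb{F}$ or its dual is an affine space over a prime field, a rectangular band, a left normal band, a right semilattice, or a $p$-cyclic groupoid.

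Now I would exploit that all of these varieties \emph{except} the affine spaces are axiomatized by identities in at most three variables: idempotence and associativity together with $xyz=xz$ (rectangular bands), $xyz=xzy$ (left normal bands), the laws $x(yz)=xy,\ (xy)z=(xz)y,\ (xy)y=xy$ (right semilattices), or $x(yz)=xy,\ (xy)z=(xz)y,\ xy^{p}=x$ ($p$-cyclic groupoids), and dually. Hence whenever $\mathbb{F}$ falls into a non-affine case, every defining identity of the corresponding variety is a $\leq 3$-variable identity satisfied by $\mathbb{F}$, so it holds throughout $\mathcal{V}(\mathbb{A})$. Thus $\mathbb{A}$ lies in a variety that occurs in the classification of \emph{entropic} groupoids, and is therefore entropic.

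The remaining, and genuinely hard, case is when $\mathbb{F}$ is (the dual of) an affine space over a prime field. The obstruction is exactly that the variety of affine spaces is \emph{not} $\leq 3$-variable axiomatizable -- the entropic law is itself its essential $4$-variable identity -- so the transfer argument above breaks down. Here I would work directly on pairwise distinct quadruples, reconstructing a module structure on the whole of $\mathbb{A}$ from its three-generated affine subgroupoids: knowing that $xy=\lambda x+(1-\lambda)y$ on each three-generated piece, one pieces together a single abelian group operation on $A$ and shows, using distributivity together with minimality of the clone (which in the binary case excludes nontrivial semiprojections, preventing any persistent $4$-variable discrepancy between $s$ and $t$), that $s=t$ holds globally. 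This affine reconstruction is where I expect the real work to lie; once it is in place, $\mathbb{A}$ is an affine space and the entropic law holds in every case.
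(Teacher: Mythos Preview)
Your route through the free object $\mathbb{F}=\mathbb{F}_{\mathcal{V}(\mathbb{A})}(3)$ is a legitimate alternative to the paper's argument, which instead works with the family of all three-generated subgroupoids of $\mathbb{A}$ and must sort out the various mixtures that can arise among them (all trivial; some a right semilattice, forcing $x(xy)=x$; all normal bands, forcing associativity; etc.). Passing to a single entropic object with a minimal clone lets you invoke Theorem~\ref{THM 3.4} exactly once, and your transfer of $\leq 3$-variable identities from $\mathbb{F}$ back to $\mathcal{V}(\mathbb{A})$ is correct and handles the rectangular-band, normal-band, semilattice and $p$-cyclic cases cleanly.

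The genuine gap is the affine case. You label it ``the genuinely hard case'' and sketch a module-reconstruction programme that is never actually carried out; as written this is not a proof. In fact this case is the \emph{easiest}, and you have overlooked the tool that disposes of it immediately: an affine space is abelian, so if $\mathbb{F}$ is a nontrivial affine space lying in $\mathcal{V}(\mathbb{A})$, then Theorem~\ref{THM 2.3} (and the remark following it) forces $\mathcal{V}(\mathbb{A})$ itself to be a variety of affine spaces over a prime field, whence $\mathbb{A}$ is entropic. This is precisely how the paper handles the affine (as well as the rectangular-band and $p$-cyclic) subcases, reserving an axiomatization-by-$\leq 3$-variable-identities argument for the semilattice and normal-band situations where no such faithfulness theorem is available. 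Once you replace your proposed reconstruction with this one-line appeal to Theorem~\ref{THM 2.3}, your argument is complete.
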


\begin{proof}
We know that all three-generated subgroupoids of $\mathbb{A}$ are entropic.
If they are all trivial, then there must be a left and a right zero
semigroup among them (since the clone of $\mathbb{A}$ is not trivial), and
the product of these gives a nontrivial rectangular band in $\mathcal{V(%
\mathbb{A})}$. Applying Theorem \ref{THM 2.3}, we get that $\mathbb{A}$ is a
rectangular band. If there is a nontrivial 3-generated subalgebra which is
an affine space, a rectangular band, or (the dual of) a $p$-cyclic groupoid,
then again by Theorem \ref{THM 2.3} we have that $\mathbb{A}$ (or its dual)
belongs to one of these varieties. Hence in all these cases $\mathbb{A}$ is
entropic.

So we can assume that every three-generated subgroupoid of $\mathbb{A}$ is a
left or right semilattice or a normal band. If there is a nontrivial right
semilattice among them, then the term $x(xy)$ is the first projection on
this subalgebra, hence by the minimality of the clone we have that $\mathbb{A%
}\models $ $x(xy)=x$. This equation does not hold in a left semilattice or
in a normal band, except for a left zero semigroup (which is a right
semilattice). Thus we have that every 3-generated subalgebra is a right
semilattice. This means that all identities involving at most three
variables which hold in the variety of right semilattices also hold in $%
\mathbb{A}$. Since right semilattices are axiomatizable by three-variable
identities, we conclude that $\mathbb{A}$ itself is a right semilattice.

The case of left semilattices is similar, so finally we can suppose that we
have only normal bands as 3-generated subalgebras, i.e. that $\mathbb{A}$
satisfies all 3-variable identities that hold for normal bands.
Associativity is such an identity, so our groupoid is a distributive
semigroup, hence entropic (cf.\cite{Kepka-Nemec}, Proposition 2.3).
\end{proof}

Finally let us see which of the varieties mentioned in Theorem \ref{THM 3.4}
contain nontrivial weakly abelian algebras.

\begin{theorem}
\label{THM 3.6}If $\mathbb{A}$ is a weakly abelian entropic groupoid with a
minimal clone, then $\mathbb{A}$ or its dual is a rectangular band, an
affine space or a $p$-cyclic groupoid.
\end{theorem}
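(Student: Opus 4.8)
The plan is to combine Theorem~\ref{THM 3.4} with a single forbidden configuration. By Theorem~\ref{THM 3.4}, $\mathbb{A}$ or its dual is an affine space, a rectangular band, a left normal band, a right semilattice or a $p$-cyclic groupoid. Three of these five already appear in the desired conclusion, so the whole statement reduces to eliminating the left normal band and the right semilattice. Since weak abelianness, the entropic law and minimality of the clone are all self-dual (the dual groupoid $\langle A,yx\rangle$ is term-equivalent to $\langle A,xy\rangle$, as one clone is generated by a variable-swap of the other's generator), I would replace $\mathbb{A}$ by whichever of $\mathbb{A}$ and its dual falls in Rosenberg's list and simply assume that $\mathbb{A}$ itself is one of the five types; ruling out the two unwanted ones then yields ``$\mathbb{A}$ or its dual is a rectangular band, an affine space or a $p$-cyclic groupoid''.

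The key observation is that both left normal bands and right semilattices satisfy the two absorption identities $(xy)x=xy$ and $(xy)y=xy$. For a left normal band this is immediate from idempotence, associativity and $xyz=xzy$, giving $(xy)x=xyx=xxy=xy$ and $(xy)y=x(yy)=xy$; for a right semilattice it follows from the defining axioms, since $(xy)x=(xx)y=xy$ by the medial law and $(xy)y=xy$ is itself an axiom. Now suppose $\mathbb{A}$ is a left normal band or a right semilattice. Because it has a minimal clone it is nontrivial, hence not a left zero semigroup, so there are $a,b$ with $c:=ab\neq a$. The absorption identities give $ca=(ab)a=ab=c$ and $cb=(ab)b=ab=c$, while $aa=a$ by idempotence. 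Feeding $t(x,y)=xy$ into the definition of $\mathcal{M}(\mathbb{A})$ with first arguments $c,a$ and second arguments $b,a$ produces $\bigl(\begin{smallmatrix} cb & ca \\ ab & aa \end{smallmatrix}\bigr)=\bigl(\begin{smallmatrix} c & c \\ c & a \end{smallmatrix}\bigr)\in\mathcal{M}(\mathbb{A})$, a matrix of the forbidden shape $\bigl(\begin{smallmatrix} u & u \\ u & v \end{smallmatrix}\bigr)$ with $u=c\neq a=v$. This contradicts weak abelianness, so $\mathbb{A}$ can be neither a left normal band nor a right semilattice, and the theorem follows.

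I expect the only real work to be in locating this common absorption identity: once one notices that both families to be excluded collapse $(xy)x$ and $(xy)y$ back to $xy$, the violating matrix writes itself, and the same computation disposes of both cases at once. It is worth verifying that this configuration does \emph{not} accidentally kill the three admissible types, and indeed it does not: in a rectangular band $(ab)a=aba=aa=a$, so one generally does not have $ca=c$, and the three corners fail to coincide (the same is true for affine spaces and $p$-cyclic groupoids). The two remaining points needing a little care are the self-duality reduction used in the first paragraph and the observation that a minimal clone forces $\mathbb{A}$ to be a genuine, non-left-zero groupoid, which is precisely what supplies the elements $a,b$ with $ab\neq a$.
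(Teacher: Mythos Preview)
Your argument is correct and follows the same overall plan as the paper: invoke Theorem~\ref{THM 3.4} and then eliminate the (left) normal band and (right) semilattice possibilities by exhibiting a forbidden $\bigl(\begin{smallmatrix} u & u \\ u & v\end{smallmatrix}\bigr)$-matrix. The only difference is in how the elimination is carried out. The paper treats the two families separately: for a nontrivial left normal band it observes that $\{a,ab\}$ is a two-element subsemilattice (which is exactly your $\{a,c\}$, and your matrix is precisely the one witnessing that a two-element semilattice is not weakly abelian), while for a right semilattice it uses the $4$-ary term $(x_1x_2)(x_3x_4)$ to produce $\bigl(\begin{smallmatrix} xy & xy \\ xy & x\end{smallmatrix}\bigr)$. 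Your unification via the common absorption identities $(xy)x=xy=(xy)y$ lets a single $2$-ary matrix dispose of both cases at once, which is a small but genuine simplification of the paper's proof.
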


\begin{proof}
By Theorem \ref{THM 3.4}, we only need to show that $\mathbb{A}$ cannot be a
left or right normal band, or left or right semilattice. A nontrivial
semilattice is clearly not weakly abelian. In a nontrivial left normal band
one can find elements $a,b$ such that $a\neq ab$. It is easy to check that $%
\{a,ab\}$ is a two-element subsemilattice, contradicting weak abelianness.
Similarly, a nontrivial right normal band cannot be weakly abelian either.

Finally, let us suppose that $\mathbb{A}$ is a right semilattice (the case
of a left semilattice is similar). Considering the matrix%
\begin{equation*}
\begin{pmatrix}
(x\underline{y})(yy) & (x\underline{x})(yy) \\ 
(x\underline{y})(xy) & (x\underline{x})(xy)%
\end{pmatrix}%
=%
\begin{pmatrix}
xy & xy \\ 
xy & x%
\end{pmatrix}%
\in \mathcal{M(\mathbb{A})}
\end{equation*}%
we see that $xy=x$ holds for all $x,y\in A$, and this contradicts the
assumption that $\mathbb{A}$ has a minimal clone.
\end{proof}

\bigskip

\section{Left distributive weakly abelian groupoids with minimal clones}

\bigskip

Throughout this section $\mathbb{A}$ will denote a weakly abelian groupoid
with a minimal clone. Lemma \ref{LEMMA 3.3} shows that such a groupoid
satisfies at least one of the distributive laws, so we will suppose that $%
\mathbb{A}$ is left distributive. We define a binary relation $\sim $ on $%
\mathbb{A}$ by $a\sim b$ iff $ab=a$. Clearly, this relation is reflexive. In
a sequence of lemmas we will prove that if $\sim $ is not a congruence, then 
$\mathbb{A}$ is a $p$-cyclic groupoid for some prime $p$.

\begin{lemma}
\label{LEMMA 4.1}If $\sim $ is not symmetric, then $\mathbb{A}\models
x(xy)=x $.
\end{lemma}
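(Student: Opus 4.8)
The plan is to run the usual minimality dichotomy on the binary term $p(x,y):=x(xy)$: either $p$ is a projection, in which case it must be the first one and we are done, or $p$ is nontrivial and hence generates the whole clone, from which I will extract a contradiction. The only competing projection is the second one, and this is excluded at once from a witness: assuming $\sim$ is not symmetric, fix $a,b$ with $ab=a$ but $ba\neq b$, so that $p(a,b)=a(ab)=aa=a\neq b$ (here $a\neq b$, since $a=b$ would force $ba=b$).

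First I would assemble the local multiplication data around the witnesses. Put $c:=ba$; then $c\neq b$ by hypothesis, and $c\neq a$ by weak abelianness applied to $\bigl(\begin{smallmatrix} aa & ab\\ ba & bb\end{smallmatrix}\bigr)=\bigl(\begin{smallmatrix} a & a\\ a & b\end{smallmatrix}\bigr)$, the configuration of the figure after Definition \ref{DEF 1.1}, which would otherwise force $a=b$. Lemma \ref{LEMMA 3.1} with $u=a$, $v_1=b$, $v_2=a$ gives $ac=a(ba)=a$, and identity \ref{LEMMA 3.2 iii} of Lemma \ref{LEMMA 3.2} gives $cb=(ba)b=b(ab)=ba=c$. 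One more weak-abelian matrix, $\bigl(\begin{smallmatrix} c(bb) & c(cb)\\ c(ba) & c(ca)\end{smallmatrix}\bigr)=\bigl(\begin{smallmatrix} c & c\\ c & c(ca)\end{smallmatrix}\bigr)$, yields $c(ca)=c$. With these in hand a direct computation shows that $p$ is the first projection on $\{a,c\}$, namely $p(a,a)=a$, $p(a,c)=a(ac)=aa=a$, $p(c,a)=c(ca)=c$, $p(c,c)=c$; in particular $\{a,c\}$ is closed under $p$.

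Now the dichotomy. If $p$ is trivial it is the first projection and $x(xy)=x$ holds. Otherwise $p$ generates the minimal clone, and here is the step I expect to be the main obstacle: I must pin down $ca$, but the naive attempt is circular, since the contradiction I am aiming for needs $\{a,c\}$ to be a subuniverse, which is what knowing $ca$ would provide. The way around is to notice that $\{a,c\}$ is already closed under $p$, so the moment $p$ generates the clone, $\{a,c\}$ is automatically a subuniverse of $\mathbb{A}$. Restriction to this subuniverse is a clone homomorphism sending $p$ to the first projection, so the restricted clone is trivial; hence $xy$ restricted to $\{a,c\}$ is a projection, and since $ac=a\neq c$ it is the first projection, giving $ca=c$. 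Finally, feeding $ca=c$, $cb=c$, $ba=c$, $bb=b$ into weak abelianness through $\bigl(\begin{smallmatrix} ca & cb\\ ba & bb\end{smallmatrix}\bigr)=\bigl(\begin{smallmatrix} c & c\\ c & b\end{smallmatrix}\bigr)$ forces $b=c$, contradicting $c\neq b$. Thus the nontrivial case is impossible, and $\mathbb{A}\models x(xy)=x$.
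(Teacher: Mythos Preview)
Your argument is correct, and it is genuinely different from the paper's. The paper works inside the full subgroupoid $\mathbb{S}$ generated by $a$ and $b$: it proves by induction that $a$ is a left zero in $\mathbb{S}$ and that $c$ satisfies $cz=c$ for every $z\in S\setminus\{a\}$, then shows that the set $S\setminus\{c\}$ is closed under $x(xy)$ but not under $xy$, which forces $x(xy)$ to be a projection. Your route is shorter: you stay on the two-element set $\{a,c\}$, observe that $p=x(xy)$ already acts as the first projection there, and use the minimality dichotomy to either finish immediately or conclude that $\{a,c\}$ is a subuniverse with $ca=c$; the extra weak-abelian matrix $\bigl(\begin{smallmatrix} ca & cb\\ ba & bb\end{smallmatrix}\bigr)$ then yields the contradiction $b=c$. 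In effect you are recycling the computation that the paper carried out earlier in Lemma~\ref{LEMMA 3.3} and adding one final matrix to close the argument, whereas the paper builds new structural information about $\mathbb{S}$. Your version is more economical; the paper's version yields more detailed information about $\mathbb{S}$ (left-zero behaviour of $a$ and $c$) that is not actually needed for the lemma. It is also worth noting that neither proof makes any use of the ambient left-distributivity hypothesis of Section~4.
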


\begin{proof}
Suppose that there are elements $a,b\in A$ such that $a\sim b$ but $%
b\not\sim a$, that is, $ab=a$ and $ba=c\neq b$. This situation is the same
as in Lemma \ref{LEMMA 3.3}, and we will proceed similarly, but this time we
go farther. Again, we have $c\neq a$ by the weak abelian property. Let $%
\mathbb{S}$ be the subgroupoid of $\mathbb{A}$ generated by $a$ and $b$.
According to Lemma \ref{LEMMA 3.1} $\{x\mid ax=a\}$ is a subuniverse of $%
\mathbb{A}$, and it contains $a$ and $b$. Therefore it contains $\mathbb{S}$%
, which implies that $a$ is a left zero element in this subgroupoid.
Moreover, $xy=a$ implies $x=a$ for $x,y\in S$. This can be seen in the
multiplication table of $\mathbb{S}$ by weak abelianness.%
\begin{equation*}
\begin{tabular}{l|lllll}
& $a$ & $\cdots $ & $x$ & $\cdots $ & $y$ \\ \hline
$a$ & $a$ & $\cdots $ & $a$ & $\cdots $ & $a$ \\ 
$\vdots $ & $\vdots $ &  & $\vdots $ &  & $\vdots $ \\ 
$x$ & $\ast $ & $\cdots $ & $x$ & $\cdots $ & $a$%
\end{tabular}%
\end{equation*}%
(Note that we have $xx=x$ by idempotence, and $\mathbb{\ast }$ indicates $xa$%
, its value is irrelevant.)

Next we show that $c$ is almost a left zero element in $\mathbb{S}$; more
precisely, $cz=c$ for all $z\in S\setminus \{a\}$. Since $z$ is in the
subgroupoid generated by $a$ and $b$, there is a binary term $t$ such that $%
t(a,b)=z$. We prove $cz=c$ by induction on the length of $t$. If this length
is zero, then either $t(x,y)=x$ or $t(x,y)=y$. The former is impossible
because $z\neq a$. In the latter case we have $cb=(ba)b=b(ab)=ba=c$. Now for
the induction step suppose that $z=t(a,b)=uv$ with $%
u=t_{1}(a,b),v=t_{2}(a,b) $. Again, $u\neq a$ follows from $z\neq a$ and
therefore $cu=c$ by the induction hypothesis. If $v$ is also different from $%
a$, then $cv=c$, so $cz=c(uv)=c$ by Lemma \ref{LEMMA 3.1}. If $v=a$, then we
have to prove $c(ua)=c$. Let us consider the matrix%
\begin{equation*}
\begin{pmatrix}
c(b\underline{b}) & c(b\underline{a}) \\ 
c(u\underline{b}) & c(u\underline{a})%
\end{pmatrix}%
=%
\begin{pmatrix}
cb & cc \\ 
c(ub) & c(ua)%
\end{pmatrix}%
=%
\begin{pmatrix}
c & c \\ 
c(ub) & c(ua)%
\end{pmatrix}%
\in \mathcal{M}(\mathbb{A}).
\end{equation*}%
We know that $cu=cb=c$, therefore $c(ub)=c$ as before. Therefore our matrix
is of the form $\bigl(      
\begin{smallmatrix}
c & c \\ 
c & c\left( ua\right)%
\end{smallmatrix}
\bigr)$, hence $cz=c(ua)=c$ by weak abelianness.

What we just proved means that in the multiplication table of the
subgroupoid $\mathbf{S}$, the row of $c$ is constant $c$ except for $ca$
which may be different. In the same way as we proved that $xy=a$ implies $%
x=a $, we can show that $xy=c$ implies $x=c$ or $y=a$, that is, $c$ can
appear only in its own row and in the column of $a$.

The knowledge we gathered about the multiplication table is enough to see
that the operation $x(xy)$ preserves $S\setminus \{c\}$. Indeed, if $x(xy)=c$
for some $x,y\in S$, then either $x=c$ or $xy=a$. The latter is impossible
since it would force $x=a$, but then $x(xy)=a\neq c$. However, the original
multiplication does not preserve this set because $ab=c$. Therefore by the
minimality of the clone, $x(xy)$ must be a projection. Since $a(ab)=a\neq b$%
, it can only be the first projection, i.e. the identity $x(xy)=x$ holds in $%
\mathbb{A}$.
\end{proof}

\begin{lemma}
\label{LEMMA 4.2}If $\sim $ is symmetric but not transitive, then $\mathbb{A}%
\models x(xy)=x$.
\end{lemma}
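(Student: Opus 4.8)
The plan is to follow the template of Lemma~\ref{LEMMA 4.1}: exploit the failure of transitivity to produce a small configuration on which the derived operation $x(xy)$ behaves like a first projection, and then force $x(xy)=x$ globally by the minimality of the clone. Since $\sim$ is symmetric but not transitive, I would first fix elements $a,b,d$ with $a\sim b$, $b\sim d$ but $a\not\sim d$; symmetry then gives the products $ab=a$, $ba=b$, $bd=b$, $db=d$, while $a\not\sim d$ (together with its symmetric partner $d\not\sim a$) gives $ad=e\neq a$ and $da=f\neq d$. A quick check using idempotence rules out the degenerate coincidences: $a=d$ would force $ad=aa=a$, and $e=d$ would force $ae=ad=d$, so in fact $a\neq d$ and $e\notin\{a,d\}$.

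The heart of the argument is two applications of left distributivity, which is available by the standing assumption of this section. Expanding $a(bd)$ both ways gives $ae=(ab)(ad)=a(bd)=ab=a$, so $ae=a$; symmetrically, expanding $d(ba)$ gives $df=(db)(da)=d(ba)=db=d$, so $df=d$. These two identities say precisely that the binary term $g(x,y):=x(xy)$ acts as the first projection on the two-element set $T:=\{a,d\}$: indeed $g(a,a)=a$, $g(d,d)=d$, $g(a,d)=ae=a$ and $g(d,a)=df=d$, so $g$ maps $T\times T$ into $T$. By contrast the basic product does \emph{not} preserve $T$, since $ad=e\notin T$.

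Now the minimality of the clone finishes the job. The operations preserving the subset $T\subseteq A$ form a clone containing all projections and containing $g$; if $g$ were nontrivial it would generate the whole clone, whence the generator $xy$ would also have to preserve $T$, contradicting $ad=e\notin T$. Hence $g$ is a projection, and since $g(a,d)=a\neq d$ it cannot be the second projection, so $g$ is the first projection, i.e. $\mathbb{A}\models x(xy)=x$. I expect the only real obstacle to be the bookkeeping in the middle step: one has to select the invariant set $T=\{a,d\}$ correctly and verify both that $g$ stabilises it and that the product escapes it through $ad=e$, which in turn rests on the distributivity identities $ae=a$, $df=d$ and the non-degeneracy $e\notin\{a,d\}$. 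It is worth noting that beyond these the argument seems to require only idempotence and left distributivity, not the full strength of weak abelianness.
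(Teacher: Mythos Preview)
Your argument is correct and follows essentially the same route as the paper's: both compute $a(ad)=a$ and $d(da)=d$ via left distributivity and then use minimality of the clone to force $x(xy)$ to be the first projection, the only difference being that you work with the two-element set $\{a,d\}$ while the paper carries along the middle element $b$ and uses $\{a,b,c\}$. One small point of presentation: your claim that ``$e=d$ would force $ae=ad=d$'' is not yet a contradiction at the place you state it---it only becomes one after you have established $ae=a$ in the next paragraph, so those two steps should be swapped.
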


\begin{proof}
Suppose that there are elements $a,b,c\in A$ such that $a\sim b\sim c$ but $%
a\not\sim c$. Then $a,b,c$ must be pairwise different, because $\sim $ is
reflexive by the idempotence of $\mathbb{A}$. A part of the multiplication
table looks like this:%
\begin{equation*}
\begin{tabular}{l|lll}
& $a$ & $b$ & $c$ \\ \hline
$a$ & $a$ & $a$ &  \\ 
$b$ & $b$ & $b$ & $b$ \\ 
$c$ &  & $c$ & $c$%
\end{tabular}%
\end{equation*}%
It is easy to check that we have the same in the multiplication table of $%
x(xy)$. But for this operation we can compute the missing two entries, too,
with the help of the left distributive identity:%
\begin{align*}
a(ac)& =(ab)(ac)=a(bc)=ab=a, \\
c(ca)& =(cb)(ca)=c(ba)=cb=c.
\end{align*}%
Thus we see that $x(xy)$ is the first projection on the set $\{a,b,c\}$, but
the original operation $xy$ is not, because $a\not\sim c$ implies $ac\neq a$%
. Therefore, by the minimality of the clone of $\mathbb{A}$, $x(xy)$ must be
a trivial operation, hence $\mathbb{A}$ satisfies $x(xy)=x$.
\end{proof}

To finish the investigation of the cases where $\sim $ is not an equivalence
relation, we will show that a weakly abelian groupoid with a minimal clone
satisfying $x(xy)=x$ must be a $p$-cyclic groupoid. This will be the
consequence of the following lemma, where we do not assume weak abelianness.

\begin{lemma}
\label{LEMMA 4.3}Let $\mathbb{A}$ be a groupoid with a minimal clone such
that $\mathbb{A}$ satisfies the identity $x(yz)=xy$. Then either $\mathbb{A}$
is a $p$-cyclic groupoid, or the identity $(xy)y=xy$ holds in $\mathbb{A}$.
\end{lemma}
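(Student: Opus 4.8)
The plan is to first record the easy equational consequences of the hypotheses, then to split according to the behaviour of the right translations $R_y\colon x\mapsto xy$, and finally to fight for the right-commutation law, which I expect to be the real difficulty.

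First I would note that minimality forces idempotence, so $xx=x$, and that substituting $y\mapsto x$ into $x(yz)=xy$ gives $x(xy)=x$. The hypothesis itself says $w(uv)=wu$, i.e. the right-hand factor of a product is ``absorbed'' to its left seed by any left translation. Applying this twice gives $w\bigl((xy)z\bigr)=wx=w\bigl((xz)y\bigr)$ for every $w$, so the two ``transposed'' products $u=(xy)z$ and $v=(xz)y$ satisfy $wu=wv$ for all $w$; taking $w=u$ and $w=v$ and using idempotence yields $u=uv$ and $v=vu$. Hence $u,v$ either coincide or span a two-element left-zero subsemigroup. I would also record $x(yx)=xy$ and $(xy)(zy)=(xy)z$ for later use. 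In translation language the hypotheses say exactly $R_{yz}=R_y$ and $R_y(y)=y$, so $xy=R_y(x)$ for a family of maps that is constant on the orbits of the right translations.

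Next I would set up the dichotomy on the order of the right translations by feeding the binary iterate terms $xy^{n}=(\cdots((xy)y)\cdots)y$ to minimality: each such term is either a projection or generates the whole clone. The degenerate projection values are quickly excluded (e.g.\ $xy^{n}=y$ forces a right-zero semigroup and hence triviality, contradicting minimality), so the sequence $x,xy,xy^{2},\dots$ is purely periodic, and the standard reasoning behind Rosenberg's unary type (a composite period yields a proper nontrivial subclone via a $\mathbb{Z}_{n}$-subgroup) forces the least period to be a prime $p$. This gives the clean alternative: either the sequence stabilises already at the first step, i.e.\ $(xy)y=xy$, which is the second conclusion of the lemma, or $xy^{p}=x$ for a prime $p$, so that every $R_y$ is a permutation of order dividing $p$.

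In the latter case it remains to prove $(xy)z=(xz)y$, and this is the step I expect to be the main obstacle, because it cannot follow from the identities alone: one can build idempotent groupoids satisfying $x(yz)=xy$ and $xy^{p}=x$ in which two right translations fail to commute (take disjoint invariant blocks and let the right translations attached to two blocks act on a third block as two non-commuting permutations of order $p$). Thus minimality must be used a second time. The plan is to show that a failure of commutation produces a forbidden small structure in $\mathcal{V}(\mathbb{A})$: if $u=(xy)z\neq(xz)y=v$ then, by the first paragraph, $\{u,v\}$ is a nontrivial left-zero semigroup, and I would try to combine this with the cyclic structure just obtained to exhibit a nontrivial $p$-cyclic groupoid (or a rectangular band) in $\mathcal{V}(\mathbb{A})$, and then invoke Theorem~\ref{THM 2.3} — together with its noted extension to $p$-cyclic groupoids — to conclude that $\mathbb{A}$ itself is a $p$-cyclic groupoid. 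Verifying commutation in this way, and checking nontriviality of the resulting $p$-cyclic groupoid, is where the delicate work lies; once it is done, all four defining identities $xx=x$, $x(yz)=xy$, $(xy)z=(xz)y$, $xy^{p}=x$ hold and $\mathbb{A}$ is $p$-cyclic.
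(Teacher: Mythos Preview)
Your reduction of terms to left-associated products, the dichotomy $(xy)y=xy$ versus $xy^{p}=x$ for a prime $p$, and the observation that $u=(xy)z$ and $v=(xz)y$ always satisfy $uv=u$ and $vu=v$ are all correct and agree with the paper. The genuine gap is exactly where you flag it: your plan for deriving right-commutation does not go through. The two-element left-zero semigroup $\{u,v\}$ has a trivial clone, so it is not a nontrivial representation and Theorem~\ref{THM 2.3} gives no information. Under $xy^{p}=x$ every right translation $R_b$ is a bijection, so a two-element right-zero subsemigroup is impossible (if $ab=b$ with $a\neq b$ then $R_b$ collapses $a$ and $b$), and hence no nontrivial rectangular band can be assembled in $\mathcal{V}(\mathbb{A})$ from this data. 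Finally, producing a nontrivial $p$-cyclic subgroupoid would already require $(xy)z=(xz)y$ to hold in that subgroupoid, which is circular. So the sketch, as it stands, has no mechanism to force right-commutation, and you yourself describe this step only as something you ``would try''.

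The paper's device is different and uses minimality in a sharper way. From $t_1t_2=t_1x$ (with $x$ the leftmost variable of $t_2$) and idempotence one first gets $tx=t$ whenever $x$ is the leftmost variable of $t$. Using this together with $xy^{p}=x$, one checks directly that the ternary term
\[
t(x,y,z)=\bigl(((xy^{p-1})z)y\bigr)z^{p-1}
\]
satisfies $t(x,x,z)=t(x,y,x)=t(x,y,y)=x$, i.e.\ $t$ is a first semiprojection. A minimal clone generated by a binary operation contains no nontrivial semiprojection (such a term generates no nontrivial binary operation), so $t(x,y,z)=x$ identically. Substituting $xy$ for $x$ and right-multiplying by $z$ then yields $(xz)y=(xy)z$. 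This semiprojection trick is the key idea your outline is missing.
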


\begin{proof}
Suppose that $t_{1},t_{2}$ are two terms, and the leftmost variable of $%
t_{2} $ is $x$. Then it can be shown easily by induction on the length of $%
t_{2}$, that the identity $t_{1}t_{2}$=$t_{1}x$ holds in $\mathbb{A}$. This
means that any term $t$ of $\mathbb{A}$ can be reduced to a left-associated
product: $t=(\cdots ((xy_{1})y_{2})\cdots )y_{n}$. Let us now compute what
happens if we multiply a term with its leftmost variable: $tx=t{\underline{t}%
}=t$ because the leftmost variable of the underlined $t$ is also $x$. Thus
we have the same situation as in Claim 3.9 of \cite{KSz comm}, except that
the order of the variables $y_{1},\ldots ,y_{n}$ is not irrelevant. However,
when we compute binary terms, we do not have to permute them, so every
binary term is of the form $xy^{k}$, and we can proceed as in \cite{KSz comm}
to show that either $(xy)y=xy$ or $xy^{p}=x$ holds for some prime number $p$%
. In the first case we are done, so let us suppose that the latter holds.
One can check that the term $t(x,y,z)=(((xy^{p-1})z)y)z^{p-1}$ satisfies the
identities $t(x,x,z)=t(x,y,x)=t(x,y,y)=x$, i.e., it is a first
semiprojection. Therefore $t$ does not generate any nontrivial binary
operation, so it must be trivial: $t(x,y,z)=x$. Substituting $xy$ for $x$ in
this equality and multiplying both sides on the right with $z$ we get the
identity $t(xy,y,z)z=(xy)z$. Computing the left hand side we obtain the
identity $(xz)y=(xy)z$. Thus all the defining identities of the variety of $%
p $-cyclic groupoids hold in $\mathbb{A}$.
\end{proof}

\begin{remark}
One might think that in the case $\mathbb{A}\models (xy)y=xy$ we can
conclude that $\mathbb{A}$ is a right semilattice, but this is not true. The
variety defined by the identities $xx=x,x(yz)=xy,(xy)y=xy$ has a minimal
clone. Indeed, any nontrivial term can be written in the form $t=(\cdots
((xy_{1})y_{2})\cdots )y_{n}$, and identifying all the $y_{i}$s we get $%
xy^{n}=xy$. However, these identities do not imply $(xy)z=(xz)y$, so the
variety of right semilattices is a proper subvariety of the above variety.
\end{remark}

\begin{lemma}
\label{LEMMA 4.4}If $\mathbb{A}$ is a weakly abelian groupoid with a minimal
clone that satisfies the identity $x(xy)=x$, then $\mathbb{A}$ is a $p$%
-cyclic groupoid.
\end{lemma}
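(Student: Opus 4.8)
The plan is to reduce the statement to Lemma~\ref{LEMMA 4.3}, whose hypothesis is the identity $x(yz)=xy$. Since we are only given the weaker identity $x(xy)=x$, the first task will be to upgrade it to $x(yz)=xy$; once this is done, Lemma~\ref{LEMMA 4.3} leaves open only the possibility that $(xy)y=xy$, and the final task will be to eliminate that alternative using weak abelianness. Throughout I will use that $\mathbb{A}$ is idempotent and, as assumed throughout this section, left distributive.

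First I would prove $\mathbb{A}\models(xy)(xz)=xy$. Because $\mathbb{A}$ is left distributive we have $(xy)(xz)=x(yz)$, so this identity is exactly the $x(yz)=xy$ required by Lemma~\ref{LEMMA 4.3}. To obtain it I would feed weak abelianness (Definition~\ref{DEF 1.1}) the matrix coming from the term $t(x_{1},x_{2},x_{3},x_{4})=(x_{1}x_{2})(x_{3}x_{4})$ with $\mathbf{a}=(x,y,xy)$, $\mathbf{b}=(x,y,x)$, $\mathbf{c}=(y)$, $\mathbf{d}=(z)$:
\[
\begin{pmatrix}
(xy)\bigl((xy)y\bigr) & (xy)\bigl((xy)z\bigr)\\
(xy)(xy) & (xy)(xz)
\end{pmatrix}
=
\begin{pmatrix}
xy & xy\\
xy & (xy)(xz)
\end{pmatrix}\in\mathcal{M}(\mathbb{A}).
\]
Here the three entries equal to $xy$ come from idempotence and from the hypothesis in the form $(xy)\bigl((xy)w\bigr)=xy$ (which is $x(xy)=x$ with $x$ replaced by $xy$, valid for both $w=y$ and $w=z$). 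Weak abelianness then forces $(xy)(xz)=xy$, and left distributivity rewrites this as $x(yz)=xy$.

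Next I would apply Lemma~\ref{LEMMA 4.3}: as $\mathbb{A}$ now satisfies $x(yz)=xy$, either $\mathbb{A}$ is a $p$-cyclic groupoid, in which case we are finished, or the identity $(xy)y=xy$ holds. To rule out the second case I would assume $(xy)y=xy$ and reuse the term $t$ with the substitution pattern from the proof of Theorem~\ref{THM 3.6}, obtaining
\[
\begin{pmatrix}
(xy)(yy) & (xx)(yy)\\
(xy)(xy) & (xx)(xy)
\end{pmatrix}
=
\begin{pmatrix}
xy & xy\\
xy & x
\end{pmatrix}\in\mathcal{M}(\mathbb{A}),
\]
where the top left entry is computed from $(xy)y=xy$ and the bottom right from $x(xy)=x$. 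Weak abelianness now yields $xy=x$ for all $x,y$, so $\mathbb{A}$ would be a left zero semigroup; this contradicts the assumption that the clone of $\mathbb{A}$ is minimal, hence nontrivial. Therefore the alternative $(xy)y=xy$ is impossible, and $\mathbb{A}$ must be $p$-cyclic.

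I expect the only genuinely creative step to be the first one, namely finding the matrix that witnesses $(xy)(xz)=xy$. The idea that makes it work is to place $xy$ itself into the third coordinate of $\mathbf{a}$: this makes the two top entries collapse through $(xy)\bigl((xy)w\bigr)=xy$ no matter whether $w$ equals $y$ or $z$, leaving $(xy)(xz)$ isolated in the lower right corner. After that, excluding $(xy)y=xy$ is a routine variant of the right semilattice argument already carried out in Theorem~\ref{THM 3.6}, so no further obstacle is anticipated.
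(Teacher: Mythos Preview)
Your proof is correct and follows the same three-step plan as the paper: upgrade $x(xy)=x$ to $x(yz)=xy$, invoke Lemma~\ref{LEMMA 4.3}, and then eliminate the $(xy)y=xy$ alternative via the right-semilattice matrix from Theorem~\ref{THM 3.6}. The only difference is in the first step: the paper uses the term $x_{1}(x_{2}x_{3})$ with $\mathbf{a}=(t,t)$, $\mathbf{b}=(x,y)$, $\mathbf{c}=z$, $\mathbf{d}=y$ (where $t=x(yz)$), obtaining $x(yz)=xy$ directly from weak abelianness and $x(xy)=x$ alone, whereas you first derive $(xy)(xz)=xy$ and then appeal to the standing left distributivity assumption of the section to rewrite it. Both matrices are legitimate; the paper's route is marginally more self-contained, but since left distributivity is in force throughout Section~4 your argument is equally valid.
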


\begin{proof}
We show that weak abelianness and the identity $x(xy)=x$ imply the stronger
identity $x(yz)=xy$. Let $t=t(x,y,z)=x(yz)$, and compute the following
matrix:%
\begin{equation*}
\begin{pmatrix}
t(t\underline{z}) & t(t\underline{y}) \\ 
x(y\underline{z}) & x(y\underline{y})%
\end{pmatrix}%
=%
\begin{pmatrix}
t & t \\ 
t & xy%
\end{pmatrix}%
\in \mathcal{M(\mathbb{A})}.
\end{equation*}

Thus we have $x(yz)=xy$ and we can apply the preceding lemma. The only thing
we need to show is that the identity $(xy)y=xy$ cannot hold. We can proceed
the same way as we did at the end of the proof of Theorem \ref{THM 3.6} to
see that $(xy)y=xy$ would imply $xy=x$.
\end{proof}

So far we have proved that if $\sim $ is not an equivalence relation, then $%
\mathbb{A}$ is a $p$-cyclic groupoid. From now on we will assume that $\sim $
is an equivalence relation, and we will force it to be a congruence of $%
\mathbb{A}$. Using the left distributive identity we can show that $\sim $
is not very far from being a congruence.

\begin{lemma}
\label{LEMMA 4.5}For any $a,b,c\in \mathbb{A}$, if $a\sim b$ then the
following relations are true:%
\renewcommand{\theenumi}{(\roman{enumi})}
\renewcommand{\labelenumi}{\theenumi}%
\begin{enumerate}
\item \label{LEMMA 4.5 i}$ca\sim cb$,

\item \label{LEMMA 4.5 ii}$(ac)(bc)\sim ac$.
\end{enumerate}
\end{lemma}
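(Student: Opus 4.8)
The plan is to obtain both relations directly from the identities already in hand, simply rewriting them through the definition of $\sim$ (recall that $a\sim b$ means $ab=a$). Unwinding the relation, part \ref{LEMMA 4.5 i} asks for $(ca)(cb)=ca$, and part \ref{LEMMA 4.5 ii} asks for $\bigl((ac)(bc)\bigr)(ac)=(ac)(bc)$. Neither requires a fresh appeal to weak abelianness or to minimality; each is a one-line consequence of a previously established identity, so the work is really just choosing the right substitution and collapsing a factor using $ab=a$.

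For \ref{LEMMA 4.5 i}, I would apply the left distributive law $x(yz)=(xy)(xz)$ with $x=c$, $y=a$, $z=b$, which gives $c(ab)=(ca)(cb)$. Since $a\sim b$ means $ab=a$, the left-hand side collapses to $c(ab)=ca$, whence $(ca)(cb)=ca$; this is precisely $ca\sim cb$.

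For \ref{LEMMA 4.5 ii}, I would use identity \ref{LEMMA 3.2 ii} of Lemma \ref{LEMMA 3.2}, namely $(yx)(zx)=\bigl((yx)(zx)\bigr)\bigl((yz)x\bigr)$, substituting $y=a$, $x=c$, $z=b$. This yields $(ac)(bc)=\bigl((ac)(bc)\bigr)\bigl((ab)c\bigr)$, and once more $ab=a$ reduces the factor $(ab)c$ to $ac$. Hence $(ac)(bc)=\bigl((ac)(bc)\bigr)(ac)$, which is exactly the assertion $(ac)(bc)\sim ac$.

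The only point that needs care is matching the variables of Lemma \ref{LEMMA 3.2} to the present ones in the correct order; there is no genuine obstacle here. Indeed, this lemma is essentially a translation into the language of $\sim$ of two facts already proved: the left distributive law, and the ``right distributivity modulo $\sim$'' recorded in identity \ref{LEMMA 3.2 ii}. Its role is to package these into the clean statements about $\sim$ that the subsequent congruence argument will invoke.
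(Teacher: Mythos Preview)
Your proof is correct and essentially identical to the paper's: both parts are obtained by the same substitutions into the left distributive law and into identity \ref{LEMMA 3.2 ii} of Lemma \ref{LEMMA 3.2}, followed by collapsing $ab$ to $a$.
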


\begin{proof}
To prove \ref{LEMMA 4.5 i} we simply apply the left distributive law: $%
(ca)(cb)=c(ab)=ca$. For \ref{LEMMA 4.5 ii} we substitute $x=c,y=a,z=b$ in
the identity $(yx)(zx)=((yx)(zx))((yz)x)$, which holds in $\mathbb{A}$ by
Lemma \ref{LEMMA 3.2}. We get $(ac)(bc)=((ac)(bc))((ab)c)=((ac)(bc))(ac)$
which is just what we had to prove.
\end{proof}

It would be nice if we had $ac\sim bc$ in \ref{LEMMA 4.5 ii}, because then $%
\sim $ would be a congruence. With the next lemma we finish the
investigation of the case where $\sim $ is not a congruence.

\begin{lemma}
\label{LEMMA 4.6}If $\sim $ is not a congruence relation, then $\mathbb{A}$
is a $p$-cyclic groupoid.
\end{lemma}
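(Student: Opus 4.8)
The plan is to show that these hypotheses force the identity $x(xy)=x$ to hold in $\mathbb{A}$, and then to invoke Lemma \ref{LEMMA 4.4}, which gives that $\mathbb{A}$ is a $p$-cyclic groupoid. So the real content is to produce the identity $x(xy)=x$, exactly as in Lemmas \ref{LEMMA 4.1} and \ref{LEMMA 4.2}.

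First I would pin down where $\sim$ fails to be a congruence. Since $\sim$ is now assumed to be an equivalence relation and Lemma \ref{LEMMA 4.5 i} says that $\sim$ is compatible with left multiplication, the failure must occur on the right: there are elements $a\sim b$ and $c$ with $ac\not\sim bc$. Writing $p=ac$, $q=bc$ and $r=pq$, Lemma \ref{LEMMA 4.5 ii} gives $rp=r$ (that is, $r\sim p$), while $ac\not\sim bc$ means precisely $r\neq p$. Using that $\sim$ is symmetric I also get $pr=p$; Lemma \ref{LEMMA 3.2 iii} applied to $p,q$ gives $ps=r$, where $s=qp$; and left distributivity gives $qr=(qp)(qq)=sq=s$. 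The whole configuration is symmetric under interchanging $p\leftrightarrow q$ and $r\leftrightarrow s$.

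The heart of the argument is then to exhibit a finite set $S$ containing $p,q,r$ which is closed under the binary term $x(xy)$ and on which $x(xy)$ is the first projection. The relations above already settle most of the relevant values of $u(uv)$: for instance $p(pq)=pr=p$, $q(qp)=qs=q$, $q(qr)=qs=q$ and $r(rp)=rr=r$ are all equal to the left argument. The difficulty is concentrated in the remaining ``cross'' entries, chiefly in showing $r(rq)=r$ (and, symmetrically, $s(sp)=s$), since the products $rq$ and $rs$ are not readily reducible by left distributivity or by Lemma \ref{LEMMA 3.2}. I expect this to be the main obstacle: it should be handled by feeding a suitable term (something like $(x_{1}x_{2})(x_{3}x_{4})$, or a longer product in the style of the proof of Lemma \ref{LEMMA 3.2}) into the definition of weak abelianness, arranging three equal entries and the desired value as the fourth, so that weak abelianness collapses it to $r$. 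One may have to enlarge $S$ to the closure of $\{p,q,r\}$ under $x(xy)$ and check that the same value computations propagate throughout.

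Once $x(xy)$ is the first projection on such an $S$ while $xy$ is not a projection on $S$ (it is not the first projection because $pq=r\neq p$, and not the second because $rp=r\neq p$), minimality finishes the job along the lines of Lemma \ref{LEMMA 4.2}: if $x(xy)$ were nontrivial it would generate the whole clone, so $xy$ would be a term operation of $\langle A,x(xy)\rangle$ and would therefore preserve the subuniverse $S$ as a projection, a contradiction. Thus $x(xy)$ is a projection on all of $A$; since $a(ab)=a$ it must be the first projection, i.e. $\mathbb{A}\models x(xy)=x$, and Lemma \ref{LEMMA 4.4} yields that $\mathbb{A}$ is a $p$-cyclic groupoid.
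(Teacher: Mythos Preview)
Your proposal has a real gap at precisely the point you flag as ``the main obstacle'': you do not show that $r(rq)=r$ (and its companions), and you do not show that any candidate set $S$ is closed under $x(xy)$. Hoping that ``a suitable term\dots\ fed into weak abelianness'' will do the job is not a proof; in fact the products $rq=(pq)q$ and $rs$ do not simplify via left distributivity or Lemma~\ref{LEMMA 3.2} alone, so an ad hoc matrix computation for each individual entry is unlikely to close the argument cleanly, and keeping $S$ finite while closed under $x(xy)$ is an additional, unaddressed difficulty.

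The paper avoids this entirely by a short structural observation. Instead of aiming for $x(xy)=x$ on all of $\mathbb{A}$ via a projection-on-a-subset trick, it shows that in the \emph{whole subgroupoid} $T$ generated by $\{ac,bc\}$ one has $uv\sim u$ for every $u,v\in T$. This follows by induction from two closure rules, both immediate from left distributivity and transitivity/symmetry of $\sim$:
\[
(uw\sim u\ \text{and}\ uv\sim u)\Rightarrow (uv)w\sim uv,\qquad
(wu\sim w\ \text{and}\ wv\sim w)\Rightarrow w(uv)\sim w.
\]
The base case $u,v\in\{ac,bc\}$ is exactly Lemma~\ref{LEMMA 4.5}\ref{LEMMA 4.5 ii} (and its $a\leftrightarrow b$ twin). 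Since $\sim$ is symmetric, $uv\sim u$ gives $u(uv)=u$, so $T\models x(xy)=x$. The subgroupoid $T$ is nontrivial (you already noted $(ac)(bc)\notin\{ac,bc\}$), hence Lemma~\ref{LEMMA 4.4} makes $T$ a nontrivial $p$-cyclic groupoid in $\mathcal{V}(\mathbb{A})$, and Theorem~\ref{THM 2.3} then forces $\mathbb{A}$ itself to be $p$-cyclic. In short: rather than squeezing a global identity out of minimality, the paper proves the identity on a nontrivial subalgebra and lets faithfulness (Theorem~\ref{THM 2.3}) propagate it to $\mathbb{A}$.
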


\begin{proof}
We prove first that for any $a,b,c\in \mathbb{A}$, if $a\sim b$ then the
subalgebra generated by $ac$ and $bc$ satisfies the identity $x(xy)=x$. The
second part of the previous lemma shows that $uv\sim u$ holds for $u,v\in
S=\{ac,bc\}$. Next we show that this property is inherited when we pass from 
$S$ to the subgroupoid generated by $S$. This can be done using the
following two rules:%
\begin{align*}
(uw\sim u,uv\sim u)& \Rightarrow (uv)w\sim uv, \\
(wu\sim w,wv\sim w)& \Rightarrow w(uv)\sim w.
\end{align*}%
To check the first one, we calculate $u((uv)w)=(u(uv))(uw)=u(uw)=u$, which
shows that $u\sim (uv)w$. We have assumed $u\sim uv$ therefore by
transitivity and symmetry $(uv)w\sim uv$ follows. The second one is easier: $%
w(w(uv))=w((wu)(wv))=(w(wu))(w(wv))=ww=w$. With these rules one can show by
induction on the length of terms that $uv\sim u$ for all $u,v$ in the
subgroupoid generated by $S$. Hence this subgroupoid satisfies the identity $%
x(xy)=x$.

If $\sim $ is not a congruence, then we can find elements $a,b,c$ such that $%
a\sim b$ but $ac\not\sim bc$, that is, $(ac)(bc)\neq (ac)$. If $(ac)(bc)=bc$%
, then by the second part of Lemma \ref{LEMMA 4.5} we would have $bc\sim ac$%
, which is impossible since $ac\not\sim bc$. Thus the subalgebra generated
by $\{ac,bc\}$ is not trivial. Then it has a minimal clone; it is weakly
abelian, and satisfies $x(xy)=x$, therefore by Lemma \ref{LEMMA 4.4} it is a
nontrivial $p$-cyclic groupoid in $\mathcal{V}(\mathbb{A})$. With the help
of Theorem \ref{THM 2.3} we conclude that $\mathcal{V}(\mathbb{A})$ is the
variety of $p$-cyclic groupoids.
\end{proof}

Let us summarize what we have proved so far in this section.

\begin{theorem}
\label{THM 4.7}If $\mathbb{A}$ is a weakly abelian left distributive
groupoid with a minimal clone such that the relation $\sim $ defined by $%
a\sim b\Leftrightarrow ab=a$ is not a congruence, then $\mathbb{A}$ is a $p$%
-cyclic groupoid for some prime $p$.
\end{theorem}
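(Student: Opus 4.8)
The plan is to combine the lemmas of this section into a single case analysis governed by the way in which $\sim$ fails to be a congruence. Since $\mathbb{A}$ is idempotent, $a\sim a$ holds for every $a$, so $\sim$ is automatically reflexive; consequently a failure of the congruence property must be a failure of symmetry, of transitivity, or of compatibility with the multiplication, and I would organize the argument around exactly these three possibilities.

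First I would handle the cases in which $\sim$ is not an equivalence relation. If $\sim$ is not symmetric, then Lemma~\ref{LEMMA 4.1} gives $\mathbb{A}\models x(xy)=x$; if $\sim$ is symmetric but not transitive, then Lemma~\ref{LEMMA 4.2} yields the same identity. In either situation I would then apply Lemma~\ref{LEMMA 4.4}, which asserts that a weakly abelian groupoid with a minimal clone satisfying $x(xy)=x$ is a $p$-cyclic groupoid. This disposes of the theorem whenever $\sim$ fails to be an equivalence relation.

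The remaining case is that $\sim$ is an equivalence relation but is not compatible with the operation. Here I would first note that the left-hand compatibility $ca\sim cb$ always holds by the first part of Lemma~\ref{LEMMA 4.5}, so the only way compatibility can break down is on the right: there exist $a,b,c$ with $a\sim b$ but $ac\not\sim bc$. This is precisely the hypothesis of Lemma~\ref{LEMMA 4.6}, which once more gives that $\mathbb{A}$ is a $p$-cyclic groupoid. Gathering the three cases completes the proof.

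I do not expect a genuine obstacle, since all of the substantive work has already been carried out in the preceding lemmas; the only point requiring care is to verify that ``$\sim$ is not a congruence'' decomposes exactly into the three subcases above. Reflexivity is free from idempotence, and the first part of Lemma~\ref{LEMMA 4.5} eliminates left-incompatibility, so the case split is exhaustive and the assembly is routine.
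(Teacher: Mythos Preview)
Your proposal is correct and matches the paper's approach exactly: the paper presents Theorem~\ref{THM 4.7} as a summary statement (``Let us summarize what we have proved so far in this section'') without a separate proof, and your case split into non-symmetry, non-transitivity, and right-incompatibility is precisely the organization of Lemmas~\ref{LEMMA 4.1}--\ref{LEMMA 4.6}, with the standing assumption before Lemma~\ref{LEMMA 4.5} that $\sim$ is an equivalence making Lemma~\ref{LEMMA 4.6} cover only the compatibility failure. The one cosmetic point is that Lemma~\ref{LEMMA 4.6} is literally stated with hypothesis ``$\sim$ is not a congruence relation'' rather than ``$ac\not\sim bc$'', but since its proof is carried out under that standing assumption and invokes exactly the right-incompatibility witnesses you describe, your reading is the intended one.
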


So finally we can suppose that $\mathbb{A}$ is a left distributive weakly
abelian groupoid with a minimal clone, and $\sim $ is a congruence of $%
\mathbb{A}$. The corresponding factor groupoid $\mathbb{A}/\!\!\sim $ is
distributive; right distributivity follows, because $\mathbb{A}$ satisfies
identity \ref{LEMMA 3.2 ii} from Lemma \ref{LEMMA 3.2}. Furthermore, $%
\mathbb{A}/\!\!\sim $ has a minimal or trivial clone. Therefore it is
entropic by Theorem \ref{THM 3.5}, and it must have at least two elements,
since $\mathbb{A}$ is not trivial. Using the list of entropic groupoids with
a minimal clone, we will prove that $\mathbb{A}$ is also entropic. The key
observation is that by the definition of $\sim $ we have 
\begin{equation*}
\mathbb{A}/\!\!\sim \ \models t_{1}=t_{2}\Leftrightarrow \mathbb{A}\models
t_{1}t_{2}=t_{1}.
\end{equation*}

\begin{lemma}
\label{LEMMA 4.8}If $\mathbb{A}/\!\!\sim $ has a two-element left or right
zero subsemigroup then $\mathbb{A}$ is entropic. It is impossible to have a
two-element semilattice among the subgroupoids of $\mathbb{A}/\!\!\sim $.
\end{lemma}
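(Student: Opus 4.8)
The plan rests on two facts that I would use throughout. First, every $\sim$-class of $\mathbb{A}$ is a left zero semigroup, since $x\sim y$ means exactly $xy=x$; hence two distinct $\sim$-equivalent elements always give a two-element left zero subalgebra of $\mathbb{A}$. Second, by the discussion following Theorem~\ref{THM 2.3}, as soon as $\mathcal{V}(\mathbb{A})$ contains a nontrivial rectangular band or a nontrivial $p$-cyclic groupoid, $\mathbb{A}$ itself lies in that variety and is therefore entropic. I would also use that the clone of any subalgebra of $\mathbb{A}$ is a homomorphic image of the minimal clone of $\mathbb{A}$, so it is minimal or trivial, and that $\sim$ is the kernel of the canonical map onto $\mathbb{A}/\!\!\sim$, so for fixed $a,b$ the image of $ab$ coincides with that of $a$ exactly when $ab\sim a$.

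First I would suppose that $\mathbb{A}/\!\!\sim$ has a two-element left zero subsemigroup, coming from classes $[a]\neq[b]$ with $[a][b]=[a]$ and $[b][a]=[b]$. Set $B=[a]\cup[b]$, a subalgebra of $\mathbb{A}$ because $\{[a],[b]\}$ is a subsemigroup of the quotient. For every $x,y\in B$ the image of $xy$ equals that of $x$, so $xy\sim x$ and hence $B\models x(xy)=x$. Here $B$ is automatically nontrivial: $ab$ lies in $[a]$ yet $ab\neq a$ (otherwise $a\sim b$), so $[a]$ has at least two elements. The clone of $B$ cannot be trivial: a trivial groupoid is a left or right zero semigroup, but a left zero semigroup would be a single $\sim$-class and a right zero semigroup would violate $x(xy)=x$, both impossible for $B$. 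Thus $B$ has a minimal clone, and Lemma~\ref{LEMMA 4.4} identifies it as a nontrivial $p$-cyclic groupoid, which by the second fact makes $\mathbb{A}$ entropic.

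Next I would treat the case where $\mathbb{A}/\!\!\sim$ has a two-element right zero subsemigroup $\{[a],[b]\}$, which lies in $\mathcal{V}(\mathbb{A})$ as a right zero semigroup. To build a rectangular band I still need a left zero factor. If $\sim$ is the identity relation, then $\mathbb{A}\cong\mathbb{A}/\!\!\sim$ is entropic by Theorem~\ref{THM 3.5}; otherwise some $\sim$-class contains distinct elements $c,c'$, and $\{c,c'\}$ is a two-element left zero subalgebra of $\mathbb{A}$. The direct product of this left zero semigroup with $\{[a],[b]\}$ is a nontrivial rectangular band in $\mathcal{V}(\mathbb{A})$, so $\mathbb{A}$ is a rectangular band and in particular entropic; this finishes the first assertion.

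Finally, for the second assertion I would argue by contradiction. If $\{[a],[b]\}$ were a two-element semilattice with, say, $[a]$ absorbing, then $[a][b]=[b][a]=[a]$ with $[a]\neq[b]$, so $ab\sim a$ and $ba\sim a$; in particular $(ab)a=ab$ and $a(ba)=a$. Left distributivity and idempotence give $a(ba)=(ab)(aa)=(ab)a=ab$, and comparing with $a(ba)=a$ forces $ab=a$, i.e. $a\sim b$, contradicting $[a]\neq[b]$. The main obstacle—and the reason the argument must descend to $\mathbb{A}$ rather than reason inside $\mathbb{A}/\!\!\sim$—is that weak abelianness is not inherited by homomorphic images, so the quotient may well carry a (non-weakly-abelian) semilattice; the real work is to convert the ``modulo $\sim$'' relations into honest identities in $\mathbb{A}$ via left distributivity, and to secure the nontriviality of the lifted algebra so that Theorem~\ref{THM 2.3} applies. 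This nontriviality issue is exactly what forces the split above into the left zero case, where the lift $B$ is automatically nontrivial, and the right zero case, where nontriviality must be supplied by an external left zero semigroup.
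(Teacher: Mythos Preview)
Your argument is correct, and in two of the three cases it is more economical than the paper's.

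For the left zero case you and the paper do the same thing: lift to $B=[a]\cup[b]$, observe $B\models x(xy)=x$, check that the clone of $B$ is nontrivial, and apply Lemma~\ref{LEMMA 4.4} together with Theorem~\ref{THM 2.3}. (Your remark that $[a]$ already contains the two distinct elements $a$ and $ab$ is true but not needed; $[a]\neq[b]$ alone gives $|B|\ge 2$, and your subsequent left/right-zero dichotomy is what actually rules out a trivial clone.)

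For the right zero case the paper works inside $X\cup Y$ and uses two weak-abelianness matrices to prove $(xy)z=xz$ and $x(yz)=xz$ there, so that $X\cup Y$ is a rectangular band; only when $X\cup Y$ is trivial does it fall back on the direct-product trick. You bypass this entirely by observing that the two-element right zero semigroup already lives in $\mathcal{V}(\mathbb{A})$ as a subalgebra of the quotient, and any non-singleton $\sim$-class supplies a two-element left zero factor; the product is a nontrivial rectangular band, and the degenerate case $\sim=\Delta$ reduces to Theorem~\ref{THM 3.5}. This is shorter and avoids the matrix computations.

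For the semilattice case the difference is more striking. The paper derives $(xy)y=xy$ and $(xy)x=xy$ in $X\cup Y$ via Lemma~\ref{LEMMA 3.2}\ref{LEMMA 3.2 iii}, then uses a weak-abelianness matrix to force $xy=x$. Your computation $a(ba)=(ab)(aa)=(ab)a=ab$ combined with $a(ba)=a$ (from $a\sim ba$, using symmetry of $\sim$) gives the contradiction $ab=a$ in one line, using only left distributivity and the standing assumption that $\sim$ is an equivalence. The paper's route has the advantage of exhibiting that $X\cup Y$ is a left zero semigroup (a slightly stronger statement), but for the lemma as stated your argument suffices and is considerably cleaner.
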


\begin{proof}
First let us suppose that $X,Y\in \mathbb{A}/\!\!\sim $ form a left zero
semigroup. Then for any $x,y\in X\cup Y$ we have $xy\sim x$. Therefore $%
x(xy)=x$ holds in $X\cup Y$, which is a nontrivial subgroupoid of $\mathbb{A}
$, since $X$ and $Y$ are two different congruence classes. By Lemma \ref%
{LEMMA 4.4} this subgroupoid must be $p$-cyclic, and by the minimality of
the clone of $\mathcal{V}(\mathbb{A})$, Theorem \ref{THM 2.3} implies that $%
\mathbb{A}$ itself must also be a $p$-cyclic groupoid.

Now suppose that $X,Y\in \mathbb{A}/\!\!\sim $ form a right zero semigroup.
Again, $X\cup Y$ is a subgroupoid of $\mathbb{A}$, and $t_{1}t_{2}=t_{1}$
holds in this subalgebra whenever the rightmost variables of $t_{1}$ and $%
t_{2}$ are the same (i.e., when $t_{1}=t_{2}$ holds in right zero
semigroups). Using this fact and the weak abelian property, we can compute $%
x(yz)$ for $x,y,z\in X\cup Y$ as follows: 
\begin{equation*}
\begin{pmatrix}
((xy)\underline{y})z & ((xy)\underline{z})z \\ 
((xx)\underline{y})z & ((xx)\underline{z})z%
\end{pmatrix}%
=%
\begin{pmatrix}
(xy)z & (xy)z \\ 
(xy)z & xz%
\end{pmatrix}%
\in \mathcal{M}(\mathbb{A}),
\end{equation*}%
therefore the identity $(xy)z=xz$ holds in $X\cup Y$. Similarly, $X\cup
Y\models x(yz)=xz$ can be shown by considering the following matrix: 
\begin{equation*}
\begin{pmatrix}
(xz)(\underline{z}z) & (xz)(\underline{y}z) \\ 
(xx)(\underline{z}z) & (xx)(\underline{y}z)%
\end{pmatrix}%
=%
\begin{pmatrix}
xz & xz \\ 
xz & x(yz)%
\end{pmatrix}%
\in \mathcal{M}(\mathbb{A}).
\end{equation*}%
Thus $X\cup Y$ is a rectangular band, and if it is nontrivial, then $\mathbb{%
A}$ is also a rectangular band by Theorem \ref{THM 2.3}, so we are done. If $%
X\cup Y$ is trivial, then $X$ and $Y$ must be singletons, because $X$ and $Y$
are left zero subsemigroups. Therefore $X\cup Y$ is a right zero
subsemigroup in $\mathbb{A}$. Forming the direct product of this with any
non-singleton congruence class we get a nontrivial rectangular band in $%
\mathcal{V}(\mathbb{A})$, so $\mathbb{A}$ is also a rectangular band by
Theorem \ref{THM 2.3}. If all the $\sim $-blocks of $\mathbb{A}$ are
singletons, then $\mathbb{A}=\mathbb{A}/\!\!\sim $ is distributive, hence
entropic by Theorem \ref{THM 3.5}.

Finally, let us suppose that $X,Y\in \mathbb{A}/\!\!\sim $ form a
semilattice. Then $X\cup Y$ satisfies every equation of the form $%
t_{1}t_{2}=t_{1}$ where $t_{1}=t_{2}$ is valid in every semilattice.
Combining this with identity \ref{LEMMA 3.2 iii} from Lemma \ref{LEMMA 3.2}
allows us to conclude that the identities%
\begin{align*}
(xy)y& =((xy)y)(xy)=(xy)(y(xy))=xy, \\
(xy)x& =((xy)x)(xy)=(xy)(x(xy))=xy
\end{align*}%
hold in $X\cup Y$. Using these identities we can compute the following
matrix: 
\begin{equation*}
\begin{pmatrix}
(xy)\underline{y} & (xy)\underline{x} \\ 
(xx)\underline{y} & (xx)\underline{x}%
\end{pmatrix}%
=%
\begin{pmatrix}
xy & xy \\ 
xy & x%
\end{pmatrix}%
\in \mathcal{M}(\mathbb{A}).
\end{equation*}%
Thus $X\cup Y$ is a left zero semigroup, contradicting the fact that $X$ and 
$Y$ are two different congruence classes.
\end{proof}

\begin{theorem}
\label{THM 4.9}If $\sim $ is a congruence relation of $\mathbb{A}$, then $%
\mathbb{A}$ is entropic.
\end{theorem}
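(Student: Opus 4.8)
The plan is to analyze the factor groupoid $\mathbb{A}/\!\!\sim$ by means of the structure theorem for entropic groupoids with a minimal clone. As recalled just before Lemma \ref{LEMMA 4.8}, $\mathbb{A}/\!\!\sim$ is distributive, has a minimal or trivial clone, has at least two elements, and is therefore entropic by Theorem \ref{THM 3.5}; being a homomorphic image of $\mathbb{A}$, it lies in $\mathcal{V}(\mathbb{A})$. By Theorem \ref{THM 3.4}, $\mathbb{A}/\!\!\sim$ or its dual is an affine space, a rectangular band, a left normal band, a right semilattice, or a $p$-cyclic groupoid. I would split the argument into two groups of cases, according to whether the type at hand admits a two-element left or right zero subsemigroup.

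In the first group (affine space, rectangular band, $p$-cyclic groupoid, and its dual), $\mathbb{A}/\!\!\sim$ is a nontrivial member of one of these varieties sitting inside $\mathcal{V}(\mathbb{A})$, which has a minimal clone. By Theorem \ref{THM 2.3} and the remark following it, applied in the dual $p$-cyclic case to the dual variety (which occurs in Theorem \ref{THM 2.2} as well), $\mathcal{V}(\mathbb{A})$ must coincide with the corresponding variety. Hence $\mathbb{A}$ itself is an affine space, a rectangular band, or a ($p$-cyclic groupoid or its dual); each of these is entropic, the entropic law for $p$-cyclic groupoids following at once from the identities $x(yz)=xy$ and $(xy)z=(xz)y$. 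The reason these types require Theorem \ref{THM 2.3} rather than Lemma \ref{LEMMA 4.8} is that a nontrivial affine space over a prime field and a $p$-cyclic groupoid contain no two-element left or right zero subsemigroup, so Lemma \ref{LEMMA 4.8} has no grip on them.

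In the second group (left normal band, right semilattice, their duals, and the degenerate left or right zero semigroups from the trivial-clone case) the plan is to exhibit inside $\mathbb{A}/\!\!\sim$ either a two-element left or right zero subsemigroup, so that Lemma \ref{LEMMA 4.8} yields entropicity of $\mathbb{A}$, or a two-element semilattice, which Lemma \ref{LEMMA 4.8} forbids and which thereby excludes the case. If $\mathbb{A}/\!\!\sim$ is a left or right zero semigroup then any two of its elements already form such a subsemigroup. If it is a nontrivial right semilattice, choose $a,b$ with $ab\ne a$ (such a pair exists unless the groupoid is a left zero semigroup); then $a(ab)=aa=a$ and $(ab)a=(aa)b=ab$, so $\{a,ab\}$ is a two-element left zero subsemigroup. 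If it is a nontrivial left normal band, choose $a,b$ with $ab\ne a$; then $a(ab)=(aa)b=ab$ and $(ab)a=aba=aab=ab$, so $\{a,ab\}$ is a two-element semilattice, contradicting Lemma \ref{LEMMA 4.8}, and this subcase cannot occur. The dual types (left semilattice, right normal band) are handled by the mirror-image computations, producing a two-element right zero subsemigroup or a two-element semilattice, respectively. In every surviving possibility Lemma \ref{LEMMA 4.8} gives that $\mathbb{A}$ is entropic.

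The main obstacle is organizational rather than computational: one must recognize that the entropic types split into those on which Lemma \ref{LEMMA 4.8} bites (normal bands and semilattices, which always contain a two-element semilattice or left/right zero subsemigroup) and those on which it is powerless (affine spaces and $p$-cyclic groupoids, which have no proper two-element subalgebras of the relevant kind), the latter being forced instead by the faithfulness result, Theorem \ref{THM 2.3}. A secondary subtlety is that the relation $\sim$ and the construction of $\mathbb{A}/\!\!\sim$ are asymmetric in the two arguments, so the ``or its dual'' alternative in Theorem \ref{THM 3.4} cannot be removed by a single blanket duality argument; each dual type must be paired with the correspondingly mirrored subsemigroup, even though the verifications are genuinely symmetric.
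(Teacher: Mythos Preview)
Your proof is correct and follows essentially the same route as the paper: both arguments invoke Theorem \ref{THM 2.3} for the affine, rectangular-band, and $p$-cyclic cases, exclude the normal-band cases via the two-element subsemilattice forbidden by Lemma \ref{LEMMA 4.8}, and reduce the right/left semilattice and trivial-clone cases to Lemma \ref{LEMMA 4.8} via an explicit two-element left or right zero subsemigroup. Your explicit computations for the right-semilattice and left-normal-band subcases match the paper's (the latter is deferred there to the proof of Theorem \ref{THM 3.6}); the only cosmetic difference is your two-group organization and the added remark on why Lemma \ref{LEMMA 4.8} cannot handle affine spaces or $p$-cyclic groupoids.
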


\begin{proof}
There are at least two $\sim $-classes, since otherwise $\mathbb{A}$ would
be a left zero semigroup. So $\mathbb{A}/\!\!\sim $ has at least two
elements, and if it is trivial, then we can apply the previous lemma. If
this is not the case, then $\mathbb{A}/\!\!\sim $ must belong to one of the
varieties which have entropic minimal clones. In the case of affine spaces,
rectangular bands and $p$-cyclic groupoids Theorem \ref{THM 2.3} shows that $%
\mathbb{A}$ also belongs to one of these varieties. As we have seen in the
proof of Theorem \ref{THM 3.6}, a nontrivial left or right normal band
always contains a two-element subsemilattice, but Lemma \ref{LEMMA 4.8}
shows that this is impossible for $\mathbb{A}/\!\!\sim $. Finally, let us
assume that $\mathbb{A}/\!\!\sim $ is a nontrivial right semilattice. Then
it contains elements $a,b$ such that $a\neq ab$. Using the defining
identities of the variety of right semilattices, one can check that $a$ and $%
ab$ form a two-element left zero subsemigroup in $\mathbb{A}/\!\!\sim $, so
we can apply Lemma \ref{LEMMA 4.8} again. Similarly, a nontrivial left
semilattice must contain a two-element right zero subsemigroup, so Lemma \ref%
{LEMMA 4.8} applies in this case, too.
\end{proof}

Putting together Theorems \ref{THM 4.7} and \ref{THM 4.9} with Theorem \ref%
{THM 3.6} we get the main result of this section.

\begin{theorem}
\label{THM 4.10}A left distributive weakly abelian groupoid with a minimal
clone is either a rectangular band, an affine space or (the dual of) a $p$%
-cyclic groupoid for some prime $p$.
\end{theorem}

\bigskip

\section{Summary}

\bigskip

We have seen that only minimal clones of types \ref{unary}, \ref{binary} and %
\ref{minority} can have nontrivial weakly abelian representations, and in
case of types \ref{unary} and \ref{minority} all representations are
abelian. A weakly abelian groupoid with a minimal clone is left or right
distributive by Lemma \ref{LEMMA 3.3}, thus we can apply Theorem \ref{THM
4.10} (after dualizing if necessary) to see that such a groupoid must be a
rectangular band, an affine space or (the dual of) a $p$-cyclic groupoid.
This list does not contain any new items compared to Theorem \ref{THM 2.2}.

\begin{theorem}
\label{THM 5.1}If a minimal clone has a nontrivial weakly abelian
representation, then it also has a nontrivial abelian representation.
Therefore such a clone must be a unary clone, the clone of an affine space,
a rectangular band or (the dual of) a $p$-cyclic groupoid for some prime $p$.
\end{theorem}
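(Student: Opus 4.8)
The plan is to reduce everything to the five Rosenberg types and lean on the structural results already established. By Theorem~\ref{THM 2.1}, the minimal clone in question is generated by an operation $f$ of minimum arity belonging to one of the types \ref{unary}--\ref{semiprojection}, and this type is an invariant of the clone. First I would dispose of the cases already handled in Section~2: a clone of type \ref{majority} or \ref{semiprojection} has no nontrivial weakly abelian representation at all, so the hypothesis is vacuous there; a clone of type \ref{unary} has only strongly abelian (in particular abelian) representations, and any nontrivial representation of a clone of type \ref{minority} is abelian. In each of these surviving cases the clone already possesses a nontrivial abelian representation, namely any nontrivial representation. Thus the only genuine work is in type \ref{binary}.

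For type \ref{binary}, a nontrivial weakly abelian representation assigns to the idempotent binary generator a binary operation, producing a nontrivial idempotent weakly abelian groupoid $\mathbb{A}$ whose clone is minimal (a nontrivial representation of a minimal clone is itself minimal). By Lemma~\ref{LEMMA 3.3}, $\mathbb{A}$ satisfies at least one of the distributive laws; after passing to the dual groupoid if necessary --- which is again weakly abelian and has the dualized minimal clone --- I may assume $\mathbb{A}$ is left distributive. Now Theorem~\ref{THM 4.10} applies and tells me that $\mathbb{A}$ is a rectangular band, an affine space, or (the dual of) a $p$-cyclic groupoid. In every one of these cases the clone of $\mathbb{A}$ occurs in the list of Theorem~\ref{THM 2.2}, so it has a nontrivial abelian representation. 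Note that $\mathbb{A}$ itself need not be abelian --- this is the content of the remark about $p$-cyclic groupoids --- but what is required is a nontrivial abelian representation of the \emph{clone}, and this is exactly what Theorem~\ref{THM 2.2} supplies. This establishes the first assertion.

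The characterization then follows at once: since the clone has a nontrivial abelian representation, Theorem~\ref{THM 2.2} forces it to be a unary clone, the clone of an affine space, the clone of a rectangular band, or (the dual of) a $p$-cyclic clone, which is precisely the asserted list. I expect no serious obstacle in the argument itself, because the substantive difficulty has already been discharged in the long chain of lemmas culminating in Theorem~\ref{THM 4.10}; the only points needing care are the translation from a representation of the clone to the groupoid $\mathbb{A}$ (and the verification that weak abelianness, idempotence, and minimality of the clone transfer correctly), together with the harmless dualization used to reduce the right distributive case to the left distributive one.
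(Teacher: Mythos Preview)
Your proposal is correct and follows essentially the same route as the paper: dispose of Rosenberg types \ref{unary}, \ref{majority}, \ref{minority}, \ref{semiprojection} using the observations from Section~2, and for type \ref{binary} invoke Lemma~\ref{LEMMA 3.3} to get one-sided distributivity, dualize if necessary, and apply Theorem~\ref{THM 4.10} to land in the list of Theorem~\ref{THM 2.2}. Your write-up is more explicit about the bookkeeping (the passage from representation to groupoid, the dualization, and the distinction between $\mathbb{A}$ being abelian versus its clone having an abelian representation), but the argument is the same.
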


Unary algebras, rectangular bands and affine spaces are abelian. A $p$%
-cyclic groupoid must be weakly abelian, as we shall see in the following
lemma.

\begin{lemma}
\label{LEMMA 5.2}Every $p$-cyclic groupoid is weakly abelian.
\end{lemma}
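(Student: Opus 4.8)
The plan is to read off a concrete coordinatisation of $p$-cyclic groupoids from their defining identities and then reduce weak abelianness to a short computation in an abelian permutation group. Write $R_{a}$ for the right translation $x\mapsto xa$. The identity $(xy)z=(xz)y$ says precisely that all the maps $R_{a}$ commute, and $xy^{p}=x$ says $R_{a}^{p}=\mathrm{id}$; hence each $R_{a}$ is a permutation of $A$, and the $R_{a}$ generate an abelian subgroup $G\le\mathrm{Sym}(A)$ of exponent dividing $p$. The remaining identity $x(yz)=xy$, together with the flattening already used in the proof of Lemma~\ref{LEMMA 4.3}, shows that every term, hence every polynomial, reduces to a left-associated product; consequently the value of any polynomial $t$ under any substitution equals $g(\ell)$, where $\ell$ is the value of the leftmost argument of $t$ and $g\in G$ is the product of the translations $R_{s}$ over the values $s$ of all the remaining arguments. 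Because the elements of $G$ commute, I may group this product however I like according to the origin of each factor.

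Next I would take a matrix in $\mathcal{M}(\mathbb{A})$ of the shape forbidden by Definition~\ref{DEF 1.1}\ref{weakly abelian}, arising from a polynomial $t$ of arity $n+m$ and tuples $\mathbf{a},\mathbf{b}\in A^{n}$, $\mathbf{c},\mathbf{d}\in A^{m}$. Since $\mathcal{M}(\mathbb{A})$ is closed under transposition (replace $t$ by the polynomial obtained by interchanging its two blocks of arguments, and swap the roles of $(\mathbf{a},\mathbf{b})$ and $(\mathbf{c},\mathbf{d})$) and the forbidden pattern $\bigl(\begin{smallmatrix}u&u\\u&v\end{smallmatrix}\bigr)$ is itself symmetric, I may assume that the leftmost argument of $t$ is either a constant or one of the first $n$ variables. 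Then its value depends only on the row, and by the first paragraph the four entries can be written as $\alpha\gamma(\ell_{1})$, $\alpha\delta(\ell_{1})$, $\beta\gamma(\ell_{2})$, $\beta\delta(\ell_{2})$, where $\alpha,\beta\in G$ collect the translations coming from the first block (evaluated at $\mathbf{a}$, resp.\ $\mathbf{b}$) and from the constants, $\gamma,\delta\in G$ collect those coming from the second block (at $\mathbf{c}$, resp.\ $\mathbf{d}$), and $\ell_{1},\ell_{2}\in A$ are the leftmost values in the two rows ($\ell_{1}=\ell_{2}$ when the leftmost argument is a constant). Thus everything comes down to the following assertion about an abelian group $G$ of permutations of $A$: if $\alpha\gamma(\ell_{1})=\alpha\delta(\ell_{1})=\beta\gamma(\ell_{2})=u$, then $\beta\delta(\ell_{2})=u$.

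Finally I would prove this assertion. Cancelling the permutation $\alpha$ in $\alpha\gamma(\ell_{1})=\alpha\delta(\ell_{1})$ shows that $\tau:=\gamma^{-1}\delta$ fixes $\ell_{1}$. From $\alpha\gamma(\ell_{1})=\beta\gamma(\ell_{2})$ I get $\gamma(\ell_{2})=\beta^{-1}\alpha\gamma(\ell_{1})$, and applying $\tau$ to this, while using that $\tau$ commutes with every element of $G$ and fixes $\ell_{1}$, yields $\tau\bigl(\gamma(\ell_{2})\bigr)=\beta^{-1}\alpha\gamma\,\tau(\ell_{1})=\beta^{-1}\alpha\gamma(\ell_{1})=\gamma(\ell_{2})$; since $\tau$ commutes with $\gamma$ this forces $\tau(\ell_{2})=\ell_{2}$, i.e.\ $\gamma(\ell_{2})=\delta(\ell_{2})$. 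Hence $\beta\delta(\ell_{2})=\beta\gamma(\ell_{2})=u$, as required. The step I expect to be the real obstacle is exactly the case $\ell_{1}\neq\ell_{2}$, where the leftmost argument varies between the two rows: a naive cancellation is then unavailable, and one must transport the fixed point of $\tau$ from $\ell_{1}$ to $\ell_{2}$ using commutativity of $G$ and, crucially, the third hypothesis $\beta\gamma(\ell_{2})=u$. (When $\ell_{1}=\ell_{2}$ the same computation collapses to a one-line cancellation.) That all three equalities are genuinely needed is reassuring, since $p$-cyclic groupoids are in general not abelian.
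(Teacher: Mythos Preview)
Your proof is correct and follows the same overall strategy as the paper: reduce every term to a left-associated product, use that the right translations $R_a$ generate an abelian permutation group, and transpose the matrix so that the leftmost argument lies in the $\mathbf{a}/\mathbf{b}$ block. The only difference is in the final computation, where you work harder than necessary. In your notation the column equality $\alpha\gamma(\ell_1)=\beta\gamma(\ell_2)$ already yields, after cancelling the bijection $\gamma$ (equivalently, using commutativity to write this as $\gamma(\alpha(\ell_1))=\gamma(\beta(\ell_2))$), that $\alpha(\ell_1)=\beta(\ell_2)$; applying $\delta$ to both sides gives $\alpha\delta(\ell_1)=\beta\delta(\ell_2)$, i.e.\ $u=v$. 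This is exactly what the paper does: it cancels $c_1\cdots c_m$ from $t(\mathbf{a},\mathbf{c})=t(\mathbf{b},\mathbf{c})$ by right cancellativity, then multiplies by $d_1,\ldots,d_m$.

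So your fixed-point detour via $\tau=\gamma^{-1}\delta$ and the row equality is unnecessary, and your closing remark that ``all three equalities are genuinely needed'' is not quite right. Only the column equality is used; what prevents the argument from proving full abelianness is not that the row hypothesis is essential, but that the transposition step---which you correctly invoke---relies on the symmetry of the pattern $\bigl(\begin{smallmatrix}u&u\\u&v\end{smallmatrix}\bigr)$, a symmetry that the abelian pattern $\bigl(\begin{smallmatrix}u&u\\v&w\end{smallmatrix}\bigr)$ lacks.
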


\begin{proof}
Suppose that $\mathbb{A}$ is a $p$-cyclic groupoid for some prime number $p$%
. (Actually, we will not need the fact that $p$ is prime.) Let $t$ be a term
of $\mathbb{A}$, with arity $n+m$, and let $\mathbf{a},\mathbf{b}\in A^{n},\ 
\mathbf{c},\mathbf{d}\in A^{m}$ be such that the matrix $\bigl(      
\begin{smallmatrix}
{t(\mathbf{a},\mathbf{c})} & {t(\mathbf{a},\mathbf{d})} \\ 
{t(\mathbf{b},\mathbf{c})} & {t(\mathbf{b},\mathbf{d})}%
\end{smallmatrix}
\bigr)$ is of the form $\bigl(      
\begin{smallmatrix}
u & u \\ 
u & v%
\end{smallmatrix}
\bigr)$. As we have seen in the proof of Lemma \ref{LEMMA 4.3}, every term
of $\mathbb{A}$ can be reduced to a left-associated product, so we may
assume that $t$ is of the form $t=(\cdots ((x_{1}x_{2})x_{3})\cdots )x_{n+m}$%
. Transposing our matrix if necessary, we can suppose that the leftmost
variable is occupied by entries belonging to $\mathbf{a}$ and $\mathbf{b}$,
say $a_{1}$ and $b_{1}$. Using the identity $(xy)z=(xz)y$ we can permute the
other variables, so that the entries in the first column of the matrix are: $%
t(\mathbf{a},\mathbf{c})=a_{1}a_{2}\cdots a_{n}c_{1}c_{2}\cdots c_{m}$, and $%
t(\mathbf{b},\mathbf{c})=b_{1}b_{2}\cdots b_{n}c_{1}c_{2}\cdots c_{m}$.
(Both products are left-associated, we have omitted the parentheses.) Our
groupoid is right cancellative, since multiplication by any element on the
right is a permutation of order $p$. Therefore the equation $t(\mathbf{a},%
\mathbf{c})=t(\mathbf{b},\mathbf{c})$ implies that $a_{1}a_{2}\cdots
a_{n}=b_{1}b_{2}\cdots b_{n}$. Multiplying both sides on the right with $%
d_{1},d_{2},\cdots ,d_{m}$, we conclude that $t(\mathbf{a},\mathbf{d})=t(%
\mathbf{b},\mathbf{d})$, that is $u=v$, so $\mathbb{A}$ is weakly abelian.
\end{proof}

\begin{theorem}
\label{THM 5.3}If a minimal clone has a nontrivial weakly abelian
representation, then all representations are weakly abelian.
\end{theorem}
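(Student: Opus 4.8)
The plan is to read off the answer from Theorem~\ref{THM 5.1} together with the fact that every structure on the resulting list is weakly abelian. First I would fix the minimal clone $C$ in question, together with a generator of minimum arity, so that the representations of $C$ are exactly the members of the variety $\mathcal{V}$ defined by the identities that hold in $C$. By Theorem~\ref{THM 5.1} the clone $C$ is a unary clone, the clone of an affine space, the clone of a rectangular band, or the clone of (the dual of) a $p$-cyclic groupoid; correspondingly $\mathcal{V}$ is the variety of unary algebras, of affine spaces, of rectangular bands, or of (dual) $p$-cyclic groupoids. It therefore suffices to verify that every algebra in each of these varieties is weakly abelian, since each such algebra arises as a representation and conversely.

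Next I would settle the first three cases simultaneously. As recalled just before Lemma~\ref{LEMMA 5.2}, unary algebras, rectangular bands and affine spaces are all abelian, and a direct comparison of the defining matrices in Definition~\ref{DEF 1.1} shows that abelianness implies weak abelianness: the matrix $\bigl(\begin{smallmatrix} u & u \\ u & v \end{smallmatrix}\bigr)$ witnessing the hypothesis of weak abelianness is the special case $w=v$, second row $(u,v)$, of the matrix $\bigl(\begin{smallmatrix} u & u \\ v & w \end{smallmatrix}\bigr)$ appearing in the definition of abelianness, so the abelian conclusion $v=w$ becomes precisely $u=v$. Hence every representation in these three cases is abelian, and so weakly abelian.

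For the remaining case I would invoke Lemma~\ref{LEMMA 5.2}, which already shows that every $p$-cyclic groupoid is weakly abelian. To include the dual $p$-cyclic groupoids I would observe that weak abelianness is self-dual: for a groupoid $\mathbb{A}$ and its dual $\mathbb{A}^{d}$ the two multiplications differ only by a transposition of variables, hence generate the same concrete clone (and, adjoining the same constants, the same polynomial clone), so $\mathcal{M}(\mathbb{A})=\mathcal{M}(\mathbb{A}^{d})$ and $\mathbb{A}$ is weakly abelian iff $\mathbb{A}^{d}$ is. Combining this with Lemma~\ref{LEMMA 5.2} disposes of the last case, and collecting the four cases completes the proof.

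The main obstacle here is not any hard computation — the two genuinely nontrivial ingredients, Theorem~\ref{THM 5.1} and Lemma~\ref{LEMMA 5.2}, are already in hand — but the conceptual bookkeeping of the first paragraph: one must make sure that \emph{every} representation of $C$, not merely the faithful or nontrivial ones, yields an algebra in the advertised variety, so that the list of Theorem~\ref{THM 5.1} really governs all representations. Trivial representations cause no difficulty, since a trivial algebra (a left or right zero semigroup) is immediately seen to be weakly abelian, but it is worth recording explicitly that $\mathcal{V}$ is cut out by the identities of $C$ and that each listed structure satisfies them.
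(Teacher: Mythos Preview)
Your proposal is correct and follows exactly the approach the paper itself uses: Theorem~\ref{THM 5.3} is stated in the paper without an explicit proof, relying on the sentence immediately preceding Lemma~\ref{LEMMA 5.2} (``Unary algebras, rectangular bands and affine spaces are abelian'') together with Lemma~\ref{LEMMA 5.2} itself. Your write-up simply makes explicit the routine verifications the paper leaves to the reader---that abelian implies weakly abelian, and that weak abelianness is self-dual---so there is nothing to correct.
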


As the following example shows, there exist nonabelian $p$-cyclic groupoids.
Therefore the two abelianness concepts differ already for groupoids with
minimal clones.

\begin{example}
For any prime number $p$ let us define the following binary operation on the
set $\mathbb{Z}\hspace{0cm}_{p}\times \{0,1\}$:%
\begin{equation*}
(a,b)\circ (c,d)=%
\begin{cases}
\left( a+1,b\right) & \text{if }b=0\text{ and }d=1; \\ 
\left( a,b\right) & \text{otherwise.}%
\end{cases}%
\end{equation*}%
The algebra $\mathbb{A}=(\mathbb{Z}\hspace{0cm}_{p}\times \{0,1\},\circ )$
is a $p$-cyclic groupoid, therefore it is weakly abelian and has a minimal
clone. It is not abelian, as we can see from the following matrix. 
\begin{equation*}
\begin{pmatrix}
(0,1)\circ (0,0) & (0,1)\circ (0,1) \\ 
(0,0)\circ (0,0) & (0,0)\circ (0,1)%
\end{pmatrix}%
=%
\begin{pmatrix}
(0,1) & (0,1) \\ 
(0,0) & (1,0)%
\end{pmatrix}%
\in \mathcal{M}(\mathbb{A}).
\end{equation*}
\end{example}

We conclude with a remark on rectangularity and strong abelianness. A
nontrivial affine space or $p$-cyclic groupoid cannot be rectangular, but
unary algebras and rectangular bands are all strongly abelian. Thus these
two concepts coincide for concrete minimal clones.

\begin{theorem}
\label{THM 5.4}If a minimal clone has a nontrivial rectangular
representation, then it also has a nontrivial strongly abelian
representation; moreover, all representations are strongly abelian. Such a
clone must be unary, or the clone of rectangular bands.
\end{theorem}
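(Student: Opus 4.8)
The plan is to reduce Theorem~\ref{THM 5.4} to Theorem~\ref{THM 5.1} exactly as the weakly abelian problem was reduced to the abelian one, and then to discard the two clone types that fail to be rectangular. First I would observe that $\mathcal{M}(\mathbb{A})$ is closed under interchanging its two columns (swap the tuples $\mathbf{c},\mathbf{d}$ in the defining term $t$) and under interchanging its two rows (swap $\mathbf{a},\mathbf{b}$). Hence if $\bigl(\begin{smallmatrix} u & u \\ u & v \end{smallmatrix}\bigr)\in\mathcal{M}(\mathbb{A})$, then also $\bigl(\begin{smallmatrix} u & u \\ v & u \end{smallmatrix}\bigr)\in\mathcal{M}(\mathbb{A})$; this last matrix is of the rectangular shape $\bigl(\begin{smallmatrix} u & v \\ w & u\end{smallmatrix}\bigr)$ (with the upper-right and lower-left positions occupied by $u$ and $v$), so rectangularity forces $u=v$. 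Thus every rectangular algebra is weakly abelian, and a minimal clone with a nontrivial rectangular representation has a nontrivial weakly abelian one. By Theorem~\ref{THM 5.1} such a clone is a unary clone, or the clone of an affine space, of a rectangular band, or of (the dual of) a $p$-cyclic groupoid.

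Next I would eliminate affine spaces and $p$-cyclic groupoids by showing that no nontrivial member of these varieties is rectangular. For an affine space over $\mathbb{Z}_p$ I would use the idempotent term $d(x,y,z)=x-y+z$ (its coefficients sum to $1$, so it lies in the clone for every $p$) together with two distinct scalars $0$ and $1$: taking $\mathbf{a}=(1,0)$, $\mathbf{b}=(0,0)$, $\mathbf{c}=0$, $\mathbf{d}=1$ gives $\bigl(\begin{smallmatrix} d(1,0,0) & d(1,0,1) \\ d(0,0,0) & d(0,0,1)\end{smallmatrix}\bigr)=\bigl(\begin{smallmatrix} 1 & 2 \\ 0 & 1\end{smallmatrix}\bigr)\in\mathcal{M}(\mathbb{A})$, a matrix with equal corners that is nonconstant (also when $p=2$, where it reads $\bigl(\begin{smallmatrix} 1 & 0 \\ 0 & 1\end{smallmatrix}\bigr)$). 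For a nontrivial $p$-cyclic groupoid $\mathbb{A}$ I would pick $a,e\in A$ with $ae\neq a$, which exist by nontriviality, and apply the definition of $\mathcal{M}(\mathbb{A})$ to $t(x,y)=xy$ with $\mathbf{a}=a$, $\mathbf{b}=ae$, $\mathbf{c}=e$, $\mathbf{d}=ae$. Using idempotence and the identity $x(yz)=xy$ one computes $a(ae)=aa=a$ and $(ae)(ae)=ae$, so the matrix equals $\bigl(\begin{smallmatrix} ae & a \\ ae^{2} & ae\end{smallmatrix}\bigr)$: its corners coincide while $ae\neq a$, so rectangularity fails again. Hence the clone must be unary or the clone of rectangular bands.

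It then remains to check that for these two surviving clone types every representation is strongly abelian. For clones of type \ref{unary} this was already recorded after Theorem~\ref{THM 2.1}. For the rectangular band clone the representations are precisely the algebras of the variety of rectangular bands, so I would only need to verify that each rectangular band is strongly abelian. In such an algebra, associativity together with $xyz=xz$ reduces any term to the product of its leftmost and rightmost variables, so every entry $t(\mathbf{a},\mathbf{c})$ depends only on the left coordinate contributed by the first block and the right coordinate contributed by the second block; substituting this description into the abelian and the rectangular conditions makes the required coincidences immediate. Since the originally given nontrivial rectangular representation is then itself strongly abelian, both remaining assertions of the theorem follow at once.

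The reduction in the first paragraph and the rectangular band computation are routine; the main obstacle is producing a counterexample to rectangularity that works uniformly for \emph{all} nontrivial $p$-cyclic groupoids and for every prime, rather than just for the Example of the previous section. This is exactly what the cyclic-shift matrix $\bigl(\begin{smallmatrix} ae & a \\ ae^{2} & ae\end{smallmatrix}\bigr)$ accomplishes, so I would build the argument around it.
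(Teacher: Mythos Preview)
Your proposal is correct and follows essentially the same route as the paper, which proves Theorem~\ref{THM 5.4} only by the one-line remark preceding it: reduce to Theorem~\ref{THM 5.1} via ``rectangular $\Rightarrow$ weakly abelian,'' discard affine spaces and $p$-cyclic groupoids as non-rectangular, and observe that unary algebras and rectangular bands are strongly abelian. You simply supply the explicit matrices and the column-swap argument that the paper leaves implicit; the only cosmetic slip is in the rectangular-band paragraph, where the leftmost/rightmost variable of the polynomial $t$ need not sit in the ``first block'' and ``second block'' respectively (and $t$ may involve constants), but the intended factorisation $t=\ell\cdot r$ still yields strong abelianness by the same case analysis.
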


\bigskip

\end{document}